\newtheorem{theorem}{Theorem}[section]
\newtheorem{lemma}[theorem]{Lemma}
\newtheorem{corollary}[theorem]{Corollary}
\theoremstyle{definition}
\newtheorem{remark}[theorem]{Remark}
\numberwithin{equation}{section}
\newcommand{\sgn}{\operatorname{sgn}}
\def\imod#1{\allowbreak\mkern5mu({\operator@font mod}\,\,#1)}
\begin{document}

\title[Quantum modularity of partial theta series with periodic coefficients]{Quantum modularity of partial theta series with periodic coefficients}

\author{Ankush Goswami}

\author{Robert Osburn}

\address{Research Institute for Symbolic Computation (RISC), Johannes Kepler University, 4040 Linz, Austria}

\email{ankushgoswami3@gmail.com\\ankush.goswami@risc.jku.at}

\address{School of Mathematics and Statistics, University College Dublin, Belfield, Dublin 4, Ireland}

\email{robert.osburn@ucd.ie}

\subjclass[2020]{11F37, 33D15, 57K16}

\keywords{Quantum modular forms, partial theta series, periodic functions, Kontsevich-Zagier series, torus knots}

\date{\today}

\begin{abstract}
We explicitly prove the quantum modularity of partial theta series with even or odd periodic coefficients. As an application, we show that the Kontsevich-Zagier series $\mathscr{F}_t(q)$ which matches (at a root of unity) the colored Jones polynomial for the family of torus knots $T(3,2^t)$, $t \geq 2$, is a weight $3/2$ quantum modular form.
This generalizes Zagier's result on the quantum modularity for the ``strange" series $F(q)$.
\end{abstract}

\maketitle

\section{Introduction}
In \cite{z}, Zagier introduced the notion of a quantum modular form of weight $k \in \frac{1}{2} \mathbb{Z}$ as a function $g : \mathbb{Q} \rightarrow \mathbb{C}$ for which the function $r_{\gamma} : \mathbb{Q} \setminus \{\gamma^{-1}(i \infty) \} \rightarrow \mathbb{C}$ given by
\begin{equation*} \label{og}
g(\alpha) - (c\alpha+d)^{-k} g\Bigl( \frac{a\alpha+b}{c\alpha+d} \Bigr) =: r_{\gamma}(\alpha)
\end{equation*}
extends to a real-analytic function on $\mathbb{P}^{1}(\mathbb{R}) \setminus S_{\gamma}$, where $S_{\gamma}$ is a finite set, for each $\gamma = \begin{pmatrix}
a & b\\c & d\end{pmatrix} \in SL_2(\mathbb{Z})$. Suitable modifications can be made to restrict the domain of $r_{\gamma}$ to appropriate subsets of $\mathbb{Q}$ and allow both multiplier systems and transformations on subgroups of $SL_2(\mathbb{Z})$. Since their inception, there has been substantial interest in studying these modular objects which emerge in diverse contexts: Maass forms \cite{b}, supersymmetric quantum field theory \cite{djr}, topological invariants for plumbed 3-manifolds \cite{bmm}, \cite{ccfgh}, \cite{cfs}, combinatorics \cite{fjks}, \cite{kll}, unified Witten-Reshetikhin-Turaev invariants \cite{hl2} and $L$-functions \cite{lz}, \cite{n}. For more examples, see Chapter 21 in \cite{bfor}. 

One of the most influential of the original five examples from \cite{z} is the Kontsevich-Zagier ``strange" series \cite{z1}
\begin{equation} \label{kz}
F(q) := \sum_{n \geq 0} (q)_n
\end{equation}
where
\begin{equation*}
(a_1, a_2, \dotsc, a_j)_n = (a_1, a_2, \dotsc, a_j ; q)_n := \prod_{k=1}^{n}(1-a_1 q^{k-1})(1 - a_2 q^{k-1}) \cdots (1-a_j q^{k-1})
\end{equation*} 
is the standard $q$-hypergeometric notation, valid for $n \in \mathbb{N}_{0} \cup \{ \infty \}$. $F(q)$ is ``strange" in the sense that it does not converge on any open subset of $\mathbb{C}$, but is well-defined when $q$ is a root of unity (where it is finite). Zagier proves that for $\alpha \in \mathbb{Q}$, $\phi(\alpha):= e^{\frac{\pi i \alpha}{12}} F(e^{2\pi i \alpha})$ is a quantum modular form of weight $3/2$ on $\mathbb{Q}$ with respect to $SL_2(\mathbb{Z})$. The key to proving this result is  the ``strange identity"
\begin{equation} \label{strid}
F(q) ``=" -\frac{1}{2} \sum_{n \geq 1} n \Bigl( \frac{12}{n} \Bigr) q^{\frac{n^2 - 1}{24}}
\end{equation}
where $``="$ means that the two sides agree to all orders at every root of unity (for further details, see Sections 2 and 5 in \cite{z1}) and $\bigl( \frac{12}{*} \bigr)$ is the quadratic character of conductor $12$. The idea is to prove quantum modular properties for the right-hand side of (\ref{strid}) which are then inherited by $F(q)$. The purpose of this paper is to place the right-hand side of (\ref{strid}) and other examples in the literature into the general context of quantum modularity of partial theta series with even or odd periodic coefficients. Before stating our main result, we introduce some notation. 

Let $f: \mathbb{Z} \rightarrow \mathbb{C}$ be an even or odd function with period $M\geq 2$. For any fixed $1\leq k_0 < 2M$, consider the set
\begin{eqnarray*}
\mathcal{S}(k_0):=\left\{1\leq k \leq \frac{M}{2} : k^2\equiv k_0\;(\mbox{mod}\;2M)\right\}.
\end{eqnarray*}
Let $\mathcal{M}_{f}(k_0) \subseteq \mathcal{S}(k_0)$ be non-empty and such that  $f(j)=0$ whenever $j\not\in \mathcal{M}_{f}(k_0)\cup\{M-k : k\in\mathcal{M}_{f}(k_0)\}$. Clearly, $S_f(k_0):=\mathcal{M}_{f}(k_0)\cup\{M-k : k\in\mathcal{M}_{f}(k_0)\}$ is the support of $f$. Consider the following partial theta series
\begin{equation}\label{thetaf0}
\theta_f(z):=\sum_{n\geq 0}f(n)\;q^{\frac{n^2}{2M}},\hspace{0.5cm}
\Theta_f(z):=\sum_{n\geq 0}nf(n)\;q^{\frac{n^2}{2M}}
\end{equation}
where $q=e^{2\pi i z}$, $z\in\mathbb{H}$. For $N\in\mathbb{N}$, let
\begin{eqnarray*}
\Gamma_1(N):=\left\{\begin{pmatrix}
a & b\\c & d\end{pmatrix}\in SL_2(\mathbb{Z}): c\equiv 0\;(\mbox{mod}\;N),\;a\equiv d\equiv 1\;(\mbox{mod}\;N)\right\}
\end{eqnarray*}
and let $\Gamma_M$ be defined as $\Gamma_1(2M)$ if $M$ is even and
\begin{eqnarray} \label{modd}
\left\{\begin{pmatrix}
a & b\\c & d\end{pmatrix}\in \Gamma_1(2M): b\equiv 0\;(\mbox{mod}\;2)\right\}
\end{eqnarray}
if $M$ is odd. Consider the set
\begin{equation} \label{bmset}
B_M := \{\alpha\in\mathbb{Q}: \alpha\;\mbox{is}\;\Gamma_M\mbox{-equivalent to}\;i\infty\}
\end{equation}
and let $A_M=A_{M,f}$ be defined by (\ref{bmset}) if $\sideset{}{'}\sum_{k\in \mathcal{M}_{f}(k_0)}f(k)\neq 0$ and by
\begin{equation*}
\{\alpha\in\mathbb{Q}: \alpha\;\mbox{is}\;\Gamma_M\mbox{-equivalent to}\;0\;\mbox{or}\;i\infty\}
\end{equation*} 
otherwise. Here and throughout, $\sideset{}{'}\sum$ means that whenever $\frac{M}{2} \in\mathcal{M}_{f}(k_0)$, we replace $f\bigl(\frac{M}{2}\bigr)$ in the sum by $\frac{1}{2} f\bigl(\frac{M}{2}\bigr)$. We also employ the convention that $f(n)=0$ if $n \not\in \mathbb{Z}$. For $k\in\frac{1}{2}\mathbb{Z}$ and $\gamma=\begin{pmatrix}
a & b\\c & d\end{pmatrix}\in\Gamma_M$, we define the Petersson slash operator $|_{k,\chi}$ by
\begin{equation*}
(g|_{k,\chi}\gamma)(\tau):=\overline{\chi(\gamma)}(c\tau+d)^{-k}g\left(\dfrac{a\tau+b}{c\tau+d}\right)
\end{equation*}
where $\tau \in \mathbb{C}$ and $\chi$ is a multiplier. Finally, we write $\left(\frac{\cdot}{\cdot}\right)$ for the extended Jacobi symbol and let $\varepsilon_d=1$ or $i$ according as $d\equiv 1$ or $3 \pmod{4}$. Our main result is now as follows. 

\begin{theorem} \label{main}
Let $f$ be a function with period $M \geq 2$ and support $S_f(k_0)$. Let $\alpha\in\mathbb{Q}$. If $f$ is even, then $\Theta_f(\alpha)$ is a quantum modular form of weight $3/2$ on $A_M$ with respect to $\Gamma_M$. If $f$ is odd, then $\theta_f(\alpha)$ is a ``strong" quantum modular form of weight $1/2$ on $\mathbb{Q}$ with respect to $\Gamma_M$ and is a quantum modular form of weight $1/2$ on $B_M$ with respect to $\Gamma_M$. 
\end{theorem}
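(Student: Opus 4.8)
The plan is to realize each partial theta series as the holomorphic Eichler integral of a genuine unary theta function of complementary weight, and then to read off the quantum cocycle as a convergent period integral of that theta. Note first the duality built into the notation: when $f$ is even, $nf(n)$ is odd, so the completed series $\sum_{n\in\mathbb{Z}}nf(n)q^{n^2/(2M)}$ vanishes identically and $\Theta_f$ is genuinely partial of weight $3/2$, while its shadow $g(z):=\sum_{n\in\mathbb{Z}}f(n)q^{n^2/(2M)}$ is an honest weight $1/2$ theta; when $f$ is odd the roles reverse, $\theta_f$ being the partial weight $1/2$ object and $g(z):=\sum_{n\in\mathbb{Z}}nf(n)q^{n^2/(2M)}$ its honest weight $3/2$ shadow. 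In each case the weight of the shadow is $2-k$, where $k\in\{1/2,3/2\}$ is the weight of the partial object.

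First I would establish the modular transformation of the shadow $g$ under $\Gamma_M$. The support hypotheses --- that $f$ vanishes off $S_f(k_0)$ with $k^2\equiv k_0\imod{2M}$ for $k\in\mathcal{M}_{f}(k_0)$ --- are exactly what force every exponent $n^2/(2M)$ to lie in one fixed residue class modulo $1$, so that $g$ is a single theta component $\theta_{2M,r}$ and transforms with a well-defined multiplier. Applying the classical transformation formula for such components (the Shimura theta multiplier built from $\varepsilon_d$ and the Jacobi symbol $\left(\frac{c}{d}\right)$), I would verify $g|_{2-k,\chi}\gamma=g$ for all $\gamma\in\Gamma_M$ and identify $\chi$; the extra condition $b\equiv 0\imod{2}$ imposed when $M$ is odd is precisely the correction that removes the stray eighth root of unity and makes $\chi$ consistent.

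Next I would transfer modularity from $g$ to the partial object, using that $\Theta_f$ (resp.\ $\theta_f$) is, coefficient by coefficient, exactly the holomorphic Eichler integral of $g$: multiplying the $q^{n^2/(2M)}$-coefficient of a weight $2-k$ theta by the Eichler factor $\left(n^2/(2M)\right)^{k-1}$ turns $f(n)$ into a constant times $nf(n)$ for even $f$, and $nf(n)$ into a constant times $f(n)$ for odd $f$. Writing $\alpha\in\mathbb{Q}$ and $z=\alpha+\frac{it}{2\pi}$, an Euler--Maclaurin expansion shows that $\Theta_f(z)$ (resp.\ $\theta_f(z)$) has an asymptotic expansion as $t\to 0^+$ whose constant term defines the value at $\alpha$. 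Forming the non-holomorphic completion of this Eichler integral, which is modular of weight $k$ by the previous step, and subtracting, the error
\[
r_\gamma(\alpha):=\Theta_f(\alpha)-\overline{\chi(\gamma)}(c\alpha+d)^{-3/2}\Theta_f(\gamma\alpha)
\]
telescopes to a single period integral $\mathrm{const}\cdot\int_{\gamma^{-1}(i\infty)}^{i\infty}\frac{g(w)}{(w-\alpha)^{3/2}}\,dw$ (and the analogue with exponent $1/2$ and $\theta_f$ in the odd case). Since $g$ is a unary theta, this integral converges and is real-analytic in $\alpha$ off the finite exceptional set, which is the required cocycle.

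Finally, the domain and the ``strong'' refinement are dictated by the behaviour of $g$ at the two cusps $i\infty$ and $0$. In the even case, when $\sideset{}{'}\sum_{k\in\mathcal{M}_{f}(k_0)}f(k)\neq 0$ the weight $1/2$ shadow $g$ has a nonzero constant term at the cusp $0$, so the period integral converges only for $\alpha$ that are $\Gamma_M$-equivalent to $i\infty$, forcing the domain $B_M$; when the sum vanishes, $g$ is cuspidal at $0$ as well and one may also admit $\alpha$ equivalent to $0$, giving $A_M$. In the odd case the shadow $g=\sum_{n\in\mathbb{Z}}nf(n)q^{n^2/(2M)}$ is a weight $3/2$ cusp form, hence decays at every cusp; the Eichler integral therefore converges and has a full asymptotic expansion at every rational, which is exactly the assertion that $\theta_f$ is a strong quantum modular form on all of $\mathbb{Q}$, the weaker statement with a real-analytic cocycle holding on $B_M$. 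I expect the main obstacle to be the combined bookkeeping of the second and third steps: checking that the theta multiplier $\chi$ is genuinely constant on $\Gamma_M$ and independent of the auxiliary data, and that the asymptotic expansion matches the Eichler integral uniformly enough to justify the telescoping that isolates $r_\gamma$.
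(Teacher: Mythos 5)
Your proposal is correct and follows essentially the same route as the paper: establish the Shimura-type transformation of the completed unary theta of complementary weight (Lemmas \ref{trantheta} and \ref{tranTheta}), realize the partial theta as its Eichler integral so that the cocycle collapses to a period integral of the shadow, control the admissible domain via the cusp expansions (Lemma \ref{infexpo}, Corollary \ref{expodecay00}), and deduce strong quantum modularity from the asymptotic agreement of $\theta_f$ and $\hat{\Theta}_f$ at rationals (Lemma \ref{LZsense}). The only stylistic difference is that you invoke cuspidality of the weight $3/2$ shadow in the odd case, whereas the paper stresses that one only needs convergence of the non-holomorphic Eichler integral on $\mathbb{H}_{-}$.
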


\begin{remark} (i) The main novelty of Theorem \ref{main} is that one does not require $\Theta_f(z)$ or $\theta_f(z)$ to be a cusp form. For example, consider $\theta_{\psi}(z)$ where $\psi$ is given in Section 4.2 (cf. \cite{kll}). Otherwise, one can invoke (\ref{thetaf00}), (\ref{oddf}) and Theorem 1.1 in \cite{BR}.

(ii) In Theorem \ref{main}, $\Theta_f(z)$ satisfies 
\begin{eqnarray*}
\Theta_f(\alpha)-(\Theta_f|_{\frac{3}{2},\chi}\gamma)(\alpha)=r_{\gamma,f}(\alpha)
\end{eqnarray*}
for all $\gamma=\begin{pmatrix}
a & b\\c & d\end{pmatrix}\in\Gamma_M$ and $\alpha\in A_M$, where 
\begin{eqnarray*}
r_{\gamma,f}(z)=-\dfrac{\sqrt{M}\cdot e^{\frac{\pi i}{4}}}{2\pi}\int_{\gamma^{-1}(i\infty)}^{i\infty}\theta_f(\tau)(\tau-\bar{z})^{-\frac{3}{2}} \;d\tau.
\end{eqnarray*}
Here, $r_{\gamma,f}:\mathbb{R}\rightarrow\mathbb{C}$ is a $C^{\infty}$ function which is real-analytic in $\mathbb{R}\setminus\{\gamma^{-1}(i\infty)\}$ and $\chi$ is a multiplier given by
\begin{eqnarray} \label{multi}
\chi(\gamma)=e^{\frac{\pi i ab k_0}{M}} \left(\dfrac{2cM}{d}\right)\varepsilon_d^{-1}.
\end{eqnarray} 

(iii) For $\tau\in\mathbb{H}_{-} :=\{\tau\in\mathbb{C}:\text{Im}(\tau)<0\}$, let $\hat{\Theta}_f(\tau)$ denote the non-holomorphic Eichler integral 
\begin{eqnarray} \label{nonhol}
\hat{\Theta}_f(\tau):=\dfrac{1}{\sqrt{iM}} \int_{\bar{\tau}}^{i\infty} \Theta_f(w)(w-\tau)^{-\frac{1}{2}} \;dw.
\end{eqnarray}
In Theorem \ref{main}, $\theta_f(\alpha)$ is a ``strong" quantum modular form in the following sense (see \cite{LZ} or \cite{z}):
\begin{enumerate}
    \item $\theta_f$ and $\hat{\Theta}_f$ ``agree to infinite order" at all rational numbers (see Lemma \ref{LZsense}),
    \item for $\tau\in\mathbb{H}_{-}$ and $\gamma\in\Gamma_M$, we have
\begin{eqnarray*}
\hat{\Theta}_f(\tau)-(\hat{\Theta}_f|_{\frac{1}{2},\chi}\gamma)(\tau)=r_{\gamma,f}(\tau)
\end{eqnarray*}
where 
\begin{eqnarray*}
r_{\gamma,f}(\tau)=\dfrac{1}{\sqrt{iM}}\int_{\gamma^{-1}(i\infty)}^{i\infty}\Theta_f(w)(w-\tau)^{-\frac{1}{2}}\;dw.
\end{eqnarray*}
\end{enumerate}
Here, $r_{\gamma,f}(\tau)$ is a holomorphic function in $\mathbb{H}_{-}$, extends as a $\mathbb{C}^{\infty}$ function to $\mathbb{R}$ and is real-analytic in $\mathbb{R}\setminus\{\gamma^{-1}(i\infty)\}$. Also, $\chi$ is the multiplier as in (\ref{multi}). A close inspection of the techniques in \cite{LZ} reveals that one needs convergence of $\hat{\Theta}_f(\tau)$ for $\tau \in \mathbb{H}_{-}$ (and not necessarily at rational points) to deduce the strong quantum modularity property for $\theta_f(z)$. To ensure this condition, $\Theta_f(z)$ does not have to be a cusp form. For a similar approach, see \cite{bcr} and \cite{hikami3}.

(iv) If $f$ is a function with period $M \geq 2$ and support $S_f(k_0)$, then $\theta_f(z)$ is a sum of a modular form and a (strong) quantum modular form both of weight $1/2$ and $\Theta_f(z)$ is a sum of a modular form and a quantum modular form both of weight $3/2$. To see this, write $f$ as 
\begin{eqnarray*}
f(n)=f_e(n)+f_o(n)
\end{eqnarray*}
where 
\begin{eqnarray*}
f_e(n):=\dfrac{f(n)+f(-n)}{2},\;\;f_o(n):=\dfrac{f(n)-f(-n)}{2}.
\end{eqnarray*}
Clearly, $f_e(n)$ (respectively, $f_o(n)$) is an even (respectively, odd) function of period $M$ with support contained in $S_f(k_0)$. Indeed, if $S_{f,e}(k_0)$ denotes (respectively, $S_{f,o}(k_0)$) the support of $f_e(n)$ (respectively, $f_o(n)$), then $S_{f,e}(k_0)=\mathcal{M}_{f, e}(k_0)\cup\{M-k : k\in \mathcal{M}_{f, e}(k_0)\}$ (respectively, $S_{f,o}(k_0)=\mathcal{M}_{f, o}(k_0)\cup\{M-k : k\in \mathcal{M}_{f, o}(k_0)\}$) for some $\mathcal{M}_{f, e}(k_0), \mathcal{M}_{f, o}(k_0) \subseteq \mathcal{M}_{f}(k_0)$. Thus, we have
\begin{equation*}
\theta_f(z)=\sum_{n\geq 0}f_e(n)\;q^{\frac{n^2}{2M}}+\sum_{n\geq 0}f_o(n)\;q^{\frac{n^2}{2M}}=:\theta_{f}^{(e)}(z)+\theta_{f}^{(o)}(z),
\end{equation*}
\begin{equation*}
\Theta_f(z)=\sum_{n\geq 0}n\;f_e(n)\;q^{\frac{n^2}{2M}}+\sum_{n\geq 0}n\;f_o(n)\;q^{\frac{n^2}{2M}}=:\Theta_{f}^{(e)}(z)+\Theta_{f}^{(o)}(z).
\end{equation*} 
Now, apply Lemma \ref{trantheta} to $\theta_f^{(e)}(z)$ and Theorem \ref{main} to $\theta_{f}^{(o)}(z)$. Similarly, apply Theorem \ref{main} to $\Theta_f^{(e)}(z)$ and Lemma \ref{tranTheta} to $\Theta_f^{(o)}(z)$. 

(v) As pointed out by the referee, there is a ``duality" in the proof of Theorem \ref{main}. If $f$ is even, then the quantum modularity of $\Theta_f(z)$ is driven by the modularity of $\theta_f(z)$ and if $f$ is odd, then the (strong) quantum modularity of $\theta_f(z)$ is driven by the modularity of $\Theta_f(z)$. See Lemmas \ref{trantheta} and \ref{tranTheta}.

\end{remark}

The paper is organized as follows. In Section 2, we carefully study some important transformation and limiting properties of $\theta_f(z)$ and $\Theta_f(z)$. In Section 3, we prove Theorem \ref{main}. In Section 4, we give some examples, including the quantum modularity of the Kontsevich-Zagier series $\mathscr{F}_t(q)$ associated to the family of torus knots $T(3,2^t)$, $t \geq 2$. This latter result generalizes the quantum modularity of $F(q)$.

\section{Preliminaries}

We begin with transformation properties of the partial theta series $\theta_f(z)$ and $\Theta_f(z)$ in (\ref{thetaf0}).

\begin{lemma}\label{trantheta}
Let $f$ be an even function with period $M\geq 2$ and support $S_f(k_0)$. For all $\gamma=\begin{pmatrix}
a & b\\c & d\end{pmatrix}\in\Gamma_M$, we have
\begin{equation} \label{trans1}
\theta_f(\gamma z)=e^{\frac{\pi iabk_0}{M}} \left(\dfrac{2cM}{d}\right)\varepsilon_d^{-1}(cz+d)^{\frac{1}{2}}\theta_f(z).
\end{equation}
\end{lemma}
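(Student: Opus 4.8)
The plan is to establish the equivalent transformation for the symmetrized two-sided series and to reduce the case $c\neq 0$ to a Gauss sum evaluation via Poisson summation. Since $q^{n^2/(2M)}=e^{\pi i n^2 z/M}$, and since $f$ is even with support lying in the residues $1,\dots,M-1$ modulo $M$ (so $f(0)=0$), we have
$$2\theta_f(z)=\sum_{n\in\mathbb{Z}}f(n)\,e^{\pi i n^2 z/M}=:\Phi(z),$$
and it suffices to prove $\Phi(\gamma z)=\chi(\gamma)(cz+d)^{1/2}\Phi(z)$ with $\chi(\gamma)=e^{\pi i abk_0/M}\bigl(\frac{2cM}{d}\bigr)\varepsilon_d^{-1}$. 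The elements of $\Gamma_M$ with $c=0$ are exactly the translations $\gamma=T^b$ (with $b$ even when $M$ is odd), and here the support condition $n^2\equiv k_0\imod{2M}$ gives $e^{\pi i n^2 b/M}=e^{\pi i k_0 b/M}$ on the support; hence $\Phi(z+b)=e^{\pi i k_0 b/M}\Phi(z)$, which matches $\chi(T^b)$. For the remaining elements I would treat $c>0$, the case $c<0$ being analogous under the standard sign conventions for the extended Jacobi symbol and for the branch of the square root.

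For $c>0$ I would invert the action by Poisson summation. Writing $\gamma z=\tfrac{a}{c}-\tfrac{1}{c(cz+d)}$ and grouping $n$ into residues modulo $2cM$ (on which both $f(n)$ and the phase $e^{\pi i an^2/(cM)}$ are invariant under $n\mapsto n+2cM$), the Gaussian integral produced by Poisson summation supplies the factor $(cz+d)^{1/2}$ and yields
$$\Phi(\gamma z)=\tfrac12\Bigl(\tfrac{cz+d}{icM}\Bigr)^{1/2}\sum_{m\in\mathbb{Z}}e^{\pi i m^2(cz+d)/(4cM)}\,G(m),\qquad G(m)=\sum_{j\bmod 2cM}f(j)\,e^{\pi i(aj^2+mj)/(cM)}.$$
Expressing the desired right-hand side through $z=\bigl((cz+d)-d\bigr)/c$, the statement reduces to the two identities $G(m)=0$ for $m$ odd and $G(2n)=2(icM)^{1/2}\chi(\gamma)\,f(n)\,e^{-\pi i n^2 d/(cM)}$ for all $n\in\mathbb{Z}$.

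The vanishing for odd $m$ is immediate: since $2M\mid c$, the integer $cM$ is even and $f(j+cM)=f(j)$, so the substitution $j\mapsto j+cM$ multiplies $G(m)$ by $(-1)^m$, forcing $G(m)=0$ when $m$ is odd. For even $m$ the heart of the argument is the evaluation of $G(2n)$. I would isolate the support by writing $j=r+Ms$ with $r\in S_f(k_0)$ and $s$ running modulo $2c$, giving
$$G(2n)=\sum_{r\in S_f(k_0)}f(r)\,e^{\pi i(ar^2+2nr)/(cM)}\sum_{s\bmod 2c}e^{\pi i\,[aMs^2+2(ar+n)s]/c}.$$
The inner quadratic Gauss sum modulo $2c$ (whose coefficients are controlled by $2M\mid c$ and $\gcd(a,c)=1$) is evaluated by Gauss sum reciprocity: it forces a congruence linking $n$ and $r$ which, after imposing $a\equiv d\equiv 1\imod{2M}$, pins $n$ to $\pm r\imod M$ and thereby recovers $f(n)$, while producing the factor $\sqrt{c}$ together with the quadratic symbol $\bigl(\frac{2cM}{d}\bigr)$ and $\varepsilon_d^{-1}$. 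Collecting the residual phases, using $ad-bc=1$ and $r^2\equiv k_0\imod{2M}$ (and, for $M$ odd, the parity $2\mid b$ built into $\Gamma_M$), then produces the translation phase $e^{\pi i abk_0/M}$ and the companion factor $e^{-\pi i n^2 d/(cM)}$, completing the match.

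The main obstacle is precisely this Gauss sum computation. The delicate points are tracking the parity and congruence conditions — this is exactly where the defining congruences of $\Gamma_M$ (and the extra condition $2\mid b$ for odd $M$) are consumed — and correctly pinning down the quadratic symbol, the factor $\varepsilon_d^{-1}$, and the branch of $(cz+d)^{1/2}$, so that the multiplier emerges exactly as the $\chi$ of (\ref{multi}). Once these bookkeeping issues are resolved, assembling the even-$m$ terms in the Poisson expansion gives $\Phi(\gamma z)=\chi(\gamma)(cz+d)^{1/2}\Phi(z)$, which is (\ref{trans1}).
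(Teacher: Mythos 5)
Your route is sound in outline but genuinely different from the paper's, and its decisive step is not actually carried out. The paper symmetrizes $\theta_f$ into a linear combination $\sideset{}{'}\sum_{k\in\mathcal{M}_f(k_0)}f(k)\,\theta(z;k,M)$ of the classical two-sided theta series and then simply \emph{cites} Shimura's Proposition 2.1 \cite{shimura} for the transformation \eqref{thetatrans1}, after which only two small observations remain: $\theta(z;ak,M)=\theta(z;k,M)$ (from $a\equiv 1\imod{2M}$) and $e^{\pi iabk^2/M}=e^{\pi iabk_0/M}$ (from $k^2\equiv k_0\imod{2M}$). Your symmetrization $2\theta_f(z)=\sum_{n\in\mathbb{Z}}f(n)e^{\pi in^2z/M}$ is the same first move in different packaging (and it correctly absorbs the $\frac{M}{2}$-term convention and $f(0)=0$), and your $c=0$ case is fine, including the role of $2\mid b$ when $M$ is odd. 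What you propose for $c>0$ is, in effect, a from-scratch re-derivation of Shimura's formula by Poisson summation; the setup, the reduction to $G(m)$, and the vanishing of $G(m)$ for odd $m$ are all correct as written.

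The gap is that the evaluation of $G(2n)$ — the ``inner quadratic Gauss sum'' that you say ``is evaluated by Gauss sum reciprocity'' and that must produce precisely $2(icM)^{1/2}\,e^{\pi iabk_0/M}\bigl(\tfrac{2cM}{d}\bigr)\varepsilon_d^{-1}f(n)e^{-\pi in^2d/(cM)}$ — is exactly where the entire content of the multiplier \eqref{multi} resides, and you do not perform it; you yourself flag it as ``the main obstacle.'' Until that computation is executed (pinning the congruence $n\equiv -ar\imod{M}$ from the vanishing of the $v$-part of the sum, evaluating the remaining Gauss sum modulo $2c/(2M)$, and tracking how $ad-bc=1$, $a\equiv d\equiv 1\imod{2M}$ and $r^2\equiv k_0\imod{2M}$ assemble into $e^{\pi iabk_0/M}\bigl(\tfrac{2cM}{d}\bigr)\varepsilon_d^{-1}$), the lemma is not proved; it is only reduced to a known-but-nontrivial classical computation. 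If you want a self-contained argument this is a perfectly legitimate (indeed more informative) path, but the efficient proof is the paper's: quote Shimura and spend your effort only on the two reductions $\theta(z;ak,M)=\theta(z;k,M)$ and $e^{\pi iabk^2/M}=e^{\pi iabk_0/M}$, which your write-up leaves implicit inside the Gauss-sum bookkeeping.
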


\begin{proof}
From \eqref{thetaf0}, we have
\begin{eqnarray} 
\theta_f(z)&=&\sum_{1\leq k< M}\sum_{n=0}^\infty f(Mn+k)\;q^{\frac{(Mn+k)^2}{2M}}\notag\\
&=&\sum_{1\leq k<M}f(k)\sum_{n\geq 0}q^{\frac{(Mn+k)^2}{2M}}\notag\\
&=&\sum_{1\leq k< \frac{M}{2}}f(k)\left(\sum_{n\geq 0}q^{\frac{(Mn+k)^2}{2M}}+\sum_{n\geq 0}q^{\frac{(Mn+M-k)^2}{2M}}\right)+ f\Bigl(\frac{M}{2}\Bigr) \;\sum_{n\geq 0}q^{\frac{\bigl(Mn+ \frac{M}{2} \bigr)^2}{2M}} \label{b4} \\
&=&\sum_{1\leq k< \frac{M}{2}}f(k)\sum_{n=-\infty}^\infty q^{\frac{(Mn+k)^2}{2M}}+\delta_f\Bigl(\frac{M}{2} \Bigr)\;\sum_{n=-\infty}^\infty q^{\frac{\bigl(Mn+\frac{M}{2}\bigr)^2}{2M}} \label{after}
\end{eqnarray}
where 
\begin{equation*}
\delta_f\Bigl(\frac{M}{2}\Bigr) :=
\begin{cases}
\frac{1}{2} f\bigl(\frac{M}{2}\bigr) & \text{if $M$ is even and $\frac{M}{2} \in\mathcal{M}_{f}(k_0)$,} \\
0 & \text{otherwise.}
\end{cases}
\end{equation*}
Note that \eqref{after} follows by changing $n\rightarrow -n-1$ in the second sum in (\ref{b4}).
Since support of $f$ is $S_f(k_0)$, \eqref{after} yields
\begin{eqnarray}\label{thetaf00}
\theta_f(z)=\sideset{}{'}\sum_{k \in \mathcal{M}_{f}(k_0)}f(k)\;\theta(z;k,M)
\end{eqnarray}
where $\theta(z;k,M)$ is the theta series
\begin{eqnarray*}\label{theta}
\theta(z;k,M):=\sum_{n=-\infty}^\infty q^{\frac{(Mn+k)^2}{2M}}.
\end{eqnarray*}
By Proposition 2.1 in \cite{shimura}, we see that $\theta(z;k,M)$ satisfies
\begin{eqnarray}\label{thetatrans1}
\theta(\gamma z;k,M)=e^{\frac{ \pi i ab k^2}{M}} \left(\dfrac{2cM}{d}\right)\varepsilon_d^{-1}(cz+d)^{\frac{1}{2}}\theta(z;ak,M)
\end{eqnarray}
for all $\gamma=\begin{pmatrix}
a & b\\c & d\end{pmatrix}\in\Gamma_M$. Also, since $\gamma\in\Gamma_M$, we have for some integer $j$ that
\begin{eqnarray} \label{thetakMtrans1}
\theta(z;ak,M)=\sum_{n=-\infty}^\infty q^{\frac{(Mn+ak)^2}{2M}}
=\sum_{n=-\infty}^\infty q^{\frac{(Mn+(1+2jM)k)^2}{2M}}=\theta(z;k,M)
\end{eqnarray}
where $n$ has been replaced by $n-2jk$ in the second sum in (\ref{thetakMtrans1}). Noting that 
\begin{eqnarray}\label{expo1}
e^{\frac{\pi i ab k^2}{M}}=e^{\frac{\pi i ab k_0}{M}},
\end{eqnarray}
(\ref{trans1}) now follows from \eqref{thetaf00} and \eqref{thetatrans1}--\eqref{expo1}.
\end{proof}

\begin{lemma}\label{tranTheta}
Let $f$ be an odd function with period $M\geq 2$ and support $S_f(k_0)$. For all $\gamma=\begin{pmatrix}
a & b\\c & d\end{pmatrix}\in\Gamma_M$, we have
\begin{equation} \label{trans2}
\Theta_f(\gamma z)=e^{\frac{\pi iabk_0}{M}} \left(\dfrac{2cM}{d}\right)\varepsilon_d^{-1}(cz+d)^{\frac{3}{2}}\Theta_f(z).
\end{equation}
\end{lemma}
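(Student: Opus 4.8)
The plan is to mirror the proof of Lemma \ref{trantheta} step by step, accounting for the oddness of $f$ and for the extra linear factor $n$ which raises the weight from $1/2$ to $3/2$. First I would expand $\Theta_f(z)$ by writing each index as $Mn+k$ with $1\leq k<M$ and using the periodicity $f(Mn+k)=f(k)$ to factor out $f(k)$, obtaining $\Theta_f(z)=\sum_{1\leq k<M}f(k)\sum_{n\geq 0}(Mn+k)\,q^{\frac{(Mn+k)^2}{2M}}$. As in the passage from \eqref{b4} to \eqref{after}, I would pair $k$ with $M-k$ for $1\leq k<\frac{M}{2}$ and fold the $M-k$ piece onto the $k$ piece by the substitution $n\rightarrow -n-1$. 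The crucial difference from the even case is that now \emph{two} sign changes occur and cancel: the substitution sends the linear factor $Mn+M-k$ to $-(Mn+k)$, while oddness and periodicity give $f(M-k)=f(-k)=-f(k)$. Their product is positive, so the two half-sums again combine into a single bilateral sum. Moreover, no $\delta_f$-correction analogous to the middle term in \eqref{after} is required, since periodicity and oddness force $f\bigl(\frac{M}{2}\bigr)=f\bigl(-\frac{M}{2}\bigr)=-f\bigl(\frac{M}{2}\bigr)=0$.

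This produces the weight-$3/2$ analogue of \eqref{thetaf00}, namely
\begin{eqnarray*}
\Theta_f(z)=\sum_{k\in\mathcal{M}_f(k_0)}f(k)\,\Theta(z;k,M),\qquad \Theta(z;k,M):=\sum_{n=-\infty}^\infty (Mn+k)\,q^{\frac{(Mn+k)^2}{2M}}.
\end{eqnarray*}
Next I would invoke Proposition 2.1 in \cite{shimura}, this time applied to the harmonic polynomial $P(x)=x$ of degree one rather than the constant polynomial that governs $\theta(z;k,M)$. The degree-one case raises the automorphy weight from $\frac{1}{2}$ to $\frac{3}{2}$ while leaving the theta multiplier $\bigl(\frac{2cM}{d}\bigr)\varepsilon_d^{-1}$ and the exponential factor intact, yielding the analogue of \eqref{thetatrans1}:
\begin{eqnarray*}
\Theta(\gamma z;k,M)=e^{\frac{\pi i ab k^2}{M}}\left(\dfrac{2cM}{d}\right)\varepsilon_d^{-1}(cz+d)^{\frac{3}{2}}\,\Theta(z;ak,M)
\end{eqnarray*}
for all $\gamma=\begin{pmatrix} a & b\\c & d\end{pmatrix}\in\Gamma_M$.

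It remains to reduce $\Theta(z;ak,M)$ to $\Theta(z;k,M)$ and to simplify the exponential. Writing $a=1+2jM$ (valid since $a\equiv 1 \pmod{2M}$) and substituting $n\rightarrow n-2jk$ carries $Mn+ak$ to $Mn+k$ and the exponent to $\frac{(Mn+k)^2}{2M}$ exactly as in \eqref{thetakMtrans1}; the presence of the linear factor does not interfere, so $\Theta(z;ak,M)=\Theta(z;k,M)$. Finally, $k^2\equiv k_0 \pmod{2M}$ for $k\in\mathcal{M}_f(k_0)\subseteq\mathcal{S}(k_0)$ gives $e^{\frac{\pi i ab k^2}{M}}=e^{\frac{\pi i ab k_0}{M}}$ as in \eqref{expo1}, and substituting these into the decomposition yields \eqref{trans2}.

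The step I expect to be the main obstacle is the correct use of Proposition 2.1 in \cite{shimura} in the degree-one case. One must verify that the odd polynomial $P(x)=x$ contributes no extra sign or character (in particular no factor such as $\sgn(a)$) beyond the change of weight, so that the multiplier in the displayed transformation agrees \emph{verbatim} with that of the even case, and that passing from $P(x)=x$ to $P(ax)=a\,x$ under the group action is consistent with the reduction $ak\equiv k \pmod{2M}$. Once this compatibility is confirmed, the remainder is a faithful transcription of the proof of Lemma \ref{trantheta}.
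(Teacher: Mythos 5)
Your proposal is correct and follows essentially the same route as the paper's proof: the same fold of $k$ with $M-k$ using $f(M-k)=-f(k)$ and the substitution $n\rightarrow -n-1$ to produce the bilateral sums $\tilde{\Theta}(z;k,M)$, the same appeal to Proposition 2.1 of \cite{shimura} with the degree-one polynomial $P(m)=m$ (the paper applies it with $A=[M]$, $\nu=1$), and the same reduction $\tilde{\Theta}(z;ak,M)=\tilde{\Theta}(z;k,M)$ via $a=1+2jM$ and $n\rightarrow n-2jk$, followed by $k^2\equiv k_0\pmod{2M}$. The points you flag as potential obstacles (the vanishing of $f\bigl(\frac{M}{2}\bigr)$ and the multiplier in the degree-one case) are handled in the paper exactly as you anticipate.
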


\begin{proof}
If $M$ is even, then $f\bigl(\frac{M}{2} \bigr)=0$ for odd $f$. So, we have
\begin{eqnarray}\label{oddf}
\Theta_f(z)&=&\sum_{0\leq k<M}\sum_{n\geq 0}(Mn+k)\;f(Mn+k)\;q^{\frac{(Mn+k)^2}{2M}}\notag\\
&=&\sum_{0\leq k\leq \frac{M}{2}}f(k)\left(\sum_{n\geq 0}(Mn+k)\;q^{\frac{(Mn+k)^2}{2M}}-\sum_{n\geq 0}(Mn+(M-k))\;q^{\frac{(Mn+M-k)^2}{2M}}\right)\notag\\
&=&\sum_{0\leq k\leq \frac{M}{2}}f(k)\sum_{n=-\infty}^\infty (Mn+k)\;q^{\frac{(Mn+k)^2}{2M}}\notag\\
&=&\sum_{k\in\mathcal{M}_{f}(k_0)}f(k)\;\tilde{\Theta}(z;k,M)
\end{eqnarray}
where 
\begin{eqnarray*}
\tilde{\Theta}(z;k,M)=\sum_{n=-\infty}^\infty(Mn+k)\;q^{\frac{(Mn+k)^2}{2M}}.
\end{eqnarray*}
Using \cite[Proposition 2.1]{shimura} (with $A=[M]$, $\nu=1$ and $P(m)=m$), we have 
\begin{eqnarray}\label{thetatrans4}
\tilde{\Theta}(\gamma z;k,M)=e^{\frac{\pi iabk^2}{M}}\left(\dfrac{2cM}{d}\right)\varepsilon_d^{-1}(cz+d)^{\frac{3}{2}}\;\tilde{\Theta}(z;ak,M)
\end{eqnarray}
for all $\gamma=\begin{pmatrix}
a & b\\c & d\end{pmatrix}\in\Gamma_M$. Also, since $\gamma\in\Gamma_M$, we have for some integer $j$ that
\begin{eqnarray}\label{thetakMtrans}
\tilde{\Theta}(z;ak,M)&=&\sum_{n=-\infty}^\infty(Mn+ak)\; q^{\frac{(Mn+ak)^2}{2M}}\notag\\
&=&\sum_{n=-\infty}^\infty(Mn+(1+2jM)k)\; q^{\frac{(Mn+(1+2jM)k)^2}{2M}}=\tilde{\Theta}(z;k,M)
\end{eqnarray}
where $n$ has been replaced by $n-2jk$ in the sum in (\ref{thetakMtrans}). Thus, combining \eqref{oddf}--\eqref{thetakMtrans} yields (\ref{trans2}).
\end{proof}

\begin{lemma}\label{infexpo}
Let $f$ be an even function with period $M\geq 2$ and support $S_f(k_0)$. Then
\begin{eqnarray} \label{linearcombprod}
\theta_f(z)=q^{\frac{k_0}{2M}}(q^M;q^M)_\infty\sideset{}{'}\sum_{k\in \mathcal{M}_{f}(k_0)}q^{k'}f(k)\;(-q^{\frac{M}{2}-k};q^M)_\infty(-q^{\frac{M}{2}+k};q^M)_\infty
\end{eqnarray}
where for $k\in \mathcal{M}_{f}(k_0)$, $k'=\dfrac{k^2-k_0}{2M}\in\mathbb{Z}_{\geq 0}$.
\end{lemma}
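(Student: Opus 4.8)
The plan is to start from the decomposition already obtained inside the proof of Lemma~\ref{trantheta}. By \eqref{thetaf00} it suffices to produce a product expansion for each bilateral theta series
\[
\theta(z;k,M)=\sum_{n=-\infty}^\infty q^{\frac{(Mn+k)^2}{2M}},\qquad k\in\mathcal{M}_f(k_0),
\]
since $\theta_f(z)=\sideset{}{'}\sum_{k\in\mathcal{M}_f(k_0)}f(k)\,\theta(z;k,M)$ and the scalars $f(k)$, together with the prime on the sum, are simply carried along.

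First I would complete the square in the exponent. Writing $\frac{(Mn+k)^2}{2M}=\frac{M}{2}n^2+kn+\frac{k^2}{2M}$ pulls out the $n$-independent factor $q^{k^2/2M}$ and leaves
\[
\theta(z;k,M)=q^{\frac{k^2}{2M}}\sum_{n=-\infty}^\infty q^{\frac{M}{2}n^2+kn}.
\]
To the remaining bilateral sum I would apply the Jacobi triple product identity
\[
\sum_{n=-\infty}^\infty x^{n^2}y^{n}=\prod_{m\geq 1}(1-x^{2m})(1+x^{2m-1}y)(1+x^{2m-1}y^{-1})
\]
with $x=q^{M/2}$ and $y=q^{k}$. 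Then $\prod_{m\geq1}(1-x^{2m})=(q^M;q^M)_\infty$, and rewriting $x^{2m-1}=q^{Mm-M/2}$ and reindexing $m\mapsto m-1$ identifies the two remaining products with $(-q^{\frac{M}{2}+k};q^M)_\infty$ and $(-q^{\frac{M}{2}-k};q^M)_\infty$. This yields
\[
\theta(z;k,M)=q^{\frac{k^2}{2M}}(q^M;q^M)_\infty(-q^{\frac{M}{2}-k};q^M)_\infty(-q^{\frac{M}{2}+k};q^M)_\infty.
\]

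Finally I would substitute this back into \eqref{thetaf00}. Writing $\frac{k^2}{2M}=\frac{k_0}{2M}+\frac{k^2-k_0}{2M}=\frac{k_0}{2M}+k'$ allows the common factor $q^{k_0/2M}(q^M;q^M)_\infty$ to be pulled out of every summand, producing exactly \eqref{linearcombprod}. The only point needing verification is arithmetic rather than analytic: that $k'=(k^2-k_0)/2M$ is a non-negative integer. Integrality follows from $k\in\mathcal{M}_f(k_0)\subseteq\mathcal{S}(k_0)$, which forces $k^2\equiv k_0\imod{2M}$; and since $k\geq 1$ while $1\leq k_0<2M$, the multiple of $2M$ represented by $k^2-k_0$ exceeds $-2M$ and is therefore $\geq 0$. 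The genuine care needed throughout is merely the bookkeeping that matches the shifted exponents coming from the triple product against the bases $q^M$ of the Pochhammer symbols; everything else is routine.
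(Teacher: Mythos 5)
Your proposal is correct and follows essentially the same route as the paper: decompose $\theta_f(z)$ via \eqref{thetaf00} into the bilateral series $\theta(z;k,M)$, apply Jacobi's triple product to each, and factor out $q^{k_0/2M}(q^M;q^M)_\infty$, with the non-negativity of $k'$ following from $k^2\equiv k_0\imod{2M}$ together with $k\geq 1$ and $k_0<2M$. Your justification of $k'\geq 0$ is in fact slightly more explicit than the paper's one-line remark, but the argument is the same.
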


\begin{proof}
We have
\begin{eqnarray}\label{thetajac}
\theta(z;k,M)&=&q^{\frac{k^2}{2M}}\sum_{n=-\infty}^\infty q^{\frac{M^2n^2+2kMn}{2M}}=q^{\frac{k^2}{2M}}\sum_{n=-\infty}^\infty (q^k)^n\;\left(q^{\frac{M}{2}}\right)^{n^2}\notag\\
&=&q^{\frac{k^2}{2M}}(q^M;q^M)_\infty(-q^{\frac{M}{2}-k};q^M)_\infty(-q^{\frac{M}{2}+k};q^M)_\infty
\end{eqnarray}
where (\ref{thetajac}) follows from Jacobi's triple product identity 
\begin{eqnarray} \label{jtp}
\sum_{n=-\infty}^\infty (-1)^nz^{n}q^{n^2}=(q^2;q^2)_\infty(zq;q^2)_\infty(z^{-1}q;q^2)_\infty
\end{eqnarray}
with $z\rightarrow -q^{k}$ and $q\rightarrow q^{\frac{M}{2}}$ in (\ref{jtp}). As in the proof of Lemma \ref{trantheta}, we have
\begin{eqnarray}\label{thetaM0}
\theta_f(z)=\sideset{}{'}\sum_{k\in \mathcal{M}_{f}(k_0)}f(k)\;\theta(z;k,M).
\end{eqnarray}
Thus, \eqref{thetajac} and \eqref{thetaM0} imply (\ref{linearcombprod}) where for $k\in \mathcal{M}_{f}(k_0)$, $k'=\dfrac{k^2-k_0}{2M}$. Since $k'\in\mathbb{Z}$ implies $k^2\geq k_0$, we conclude that $k'\in\mathbb{Z}_{\geq 0}$. 
\end{proof}

\begin{lemma}\label{expodecay0}
Let $f$ be an even function with period $M\geq 2$ and support $S_f(k_0)$. Assume $\sideset{}{'}\sum_{k\in\mathcal{M}_{f}(k_0)}f(k)=0$. Let $\alpha\in\mathbb{Q}$ be such that $\alpha\neq \gamma(i\infty)$ for any $\gamma \in \Gamma_M$. Then we have
\begin{eqnarray} \label{trans3}
\theta_f(\alpha+iy)=\dfrac{1}{\sqrt{M (y-i\alpha)}}\sideset{}{'}\sum_{k\in\mathcal{M}_{f}(k_0)}f(k)\sum_{\substack{n=-\infty\\n\neq 0}}^\infty e^{-\frac{\pi n^2}{M(y-i\alpha)}+\frac{2\pi ink}{M}}.
\end{eqnarray}
\end{lemma}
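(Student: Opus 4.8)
The plan is to reduce everything to the single building block $\theta(z;k,M)$ and to apply the classical self-dual transformation law for the Jacobi theta function (equivalently, Poisson summation) to each of these, before invoking the hypothesis $\sideset{}{'}\sum_{k\in\mathcal{M}_{f}(k_0)}f(k)=0$ to discard the $n=0$ term. Concretely, recall from \eqref{thetaf00} (see also \eqref{thetaM0}) that
\begin{equation*}
\theta_f(z)=\sideset{}{'}\sum_{k\in\mathcal{M}_{f}(k_0)}f(k)\,\theta(z;k,M),\qquad \theta(z;k,M)=\sum_{n=-\infty}^\infty q^{\frac{(Mn+k)^2}{2M}},
\end{equation*}
so it suffices to transform each $\theta(\alpha+iy;k,M)$ and then sum against $f(k)$.

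For the transformation, I would substitute $q=e^{2\pi i z}$ with $z=\alpha+iy$ and set $w:=y-i\alpha$, so that $\pi i z=-\pi w$ and
\begin{equation*}
\theta(\alpha+iy;k,M)=\sum_{n=-\infty}^\infty e^{-\frac{\pi w(Mn+k)^2}{M}}=\sum_{n=-\infty}^\infty e^{-\pi Mw\left(n+\frac{k}{M}\right)^2}.
\end{equation*}
Since $\mathrm{Re}(Mw)=My>0$, the Jacobi theta transformation
\begin{equation*}
\sum_{n=-\infty}^\infty e^{-\pi t(n+a)^2}=\frac{1}{\sqrt{t}}\sum_{m=-\infty}^\infty e^{-\frac{\pi m^2}{t}+2\pi i ma}
\end{equation*}
applies with $t=Mw$, $a=k/M$, and $\sqrt{t}$ the principal branch, giving
\begin{equation*}
\theta(\alpha+iy;k,M)=\frac{1}{\sqrt{M(y-i\alpha)}}\sum_{m=-\infty}^\infty e^{-\frac{\pi m^2}{M(y-i\alpha)}+\frac{2\pi imk}{M}}.
\end{equation*}

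Summing this against $f(k)$ over $\mathcal{M}_{f}(k_0)$ and isolating the $m=0$ contribution, which totals $\frac{1}{\sqrt{M(y-i\alpha)}}\sideset{}{'}\sum_{k\in\mathcal{M}_{f}(k_0)}f(k)$, the hypothesis $\sideset{}{'}\sum_{k\in\mathcal{M}_{f}(k_0)}f(k)=0$ forces this term to vanish; renaming $m$ as $n$ then yields \eqref{trans3}. The step requiring the most care is fixing the branch of $\sqrt{M(y-i\alpha)}$ and justifying the theta transformation, but both are handled by the observation $\mathrm{Re}(M(y-i\alpha))=My>0$ for $y>0$, which is the only constraint genuinely used in the identity. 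The restriction $\alpha\neq\gamma(i\infty)$ is not needed for the analytic identity itself; it records the regime of interest, namely the cusps away from $i\infty$, where the cancellation of the $m=0$ term is precisely what yields the asserted exponential decay (for instance at the cusp $0$, where $w=y$ and $\sum_{m\neq0}e^{-\pi m^2/(My)}\to0$ as $y\to0^+$).
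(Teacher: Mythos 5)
Your proposal is correct and follows essentially the same route as the paper: decompose $\theta_f$ via \eqref{thetaf00} into the building blocks $\theta(z;k,M)$, apply the Jacobi theta transformation (the paper cites Rademacher for the same formula, with the prefactor visibly equal to $1$), and use $\sideset{}{'}\sum_{k\in\mathcal{M}_{f}(k_0)}f(k)=0$ to remove the $n=0$ term. Your side remark that the hypothesis $\alpha\neq\gamma(i\infty)$ plays no role in the identity itself, only in the intended application to decay at cusps, is also consistent with how the paper uses the lemma.
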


\begin{proof}
For any $1\leq k<M$, we obtain the following upon using \cite[Chapter 5, pp. 76]{Rad} with $u=(y-i\alpha)M$ and $x=\frac{k}{M}$:
\begin{eqnarray}\label{thetaf00trans}
\theta(\alpha+iy;k,M)&=&\dfrac{e^{\frac{\pi i(\alpha+iy)k^2+\pi(y-i\alpha)k^2}{M}}}{\sqrt{M (y-i\alpha)}}\sum_{n=-\infty}^\infty e^{-\frac{\pi n^2}{M (y-i\alpha)}+\frac{2\pi ink}{M}}\notag\\
&=&\dfrac{1}{\sqrt{M (y-i\alpha)}}\left(1+\sum_{\substack{n=-\infty\\n\neq 0}}^\infty e^{-\frac{\pi n^2}{M (y-i\alpha)}+\frac{2\pi ink}{M}}\right).
\end{eqnarray}
As in the proof of Lemma \ref{trantheta}, we have
\begin{eqnarray}\label{thetaM1}
\theta_f(z)=\sideset{}{'}\sum_{k\in \mathcal{M}_{f}(k_0)}f(k)\;\theta(z;k,M)
\end{eqnarray}
and so (\ref{trans3}) follows from \eqref{thetaf00trans}, \eqref{thetaM1} and $\sideset{}{'}\sum_{k\in\mathcal{M}_{f}(k_0)}f(k)=0$. \end{proof}

\begin{corollary}\label{expodecay00}
Let $f$ be an even function with period $M\geq 2$ and support $S_f(k_0)$. Assume $\sideset{}{'}\sum_{k\in\mathcal{M}(k_0)}f(k)=0$. Then we have
\begin{eqnarray*}
\theta_f(iy)=\dfrac{e^{-\frac{\pi}{M y}}}{\sqrt{M y}}\left(c_f(M,k_0)+o(1)\right)
\end{eqnarray*}
where $o(1)\rightarrow 0$ as $y\rightarrow 0^+$ and
\begin{eqnarray*}
c_f(M,k_0):=2\sideset{}{'}\sum_{k\in\mathcal{M}_{f}(k_0)}f(k)\cos\left(\frac{2\pi k}{M}\right).
\end{eqnarray*}
\end{corollary}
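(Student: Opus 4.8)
The plan is to specialize Lemma \ref{expodecay0} to the case $\alpha=0$ and then isolate the leading-order contribution as $y\to 0^+$. Setting $\alpha=0$ in \eqref{trans3} gives
\begin{eqnarray*}
\theta_f(iy)=\dfrac{1}{\sqrt{My}}\sideset{}{'}\sum_{k\in\mathcal{M}_{f}(k_0)}f(k)\sum_{\substack{n=-\infty\\n\neq 0}}^\infty e^{-\frac{\pi n^2}{My}+\frac{2\pi ink}{M}},
\end{eqnarray*}
so the entire asymptotic analysis reduces to understanding the inner sum over $n\neq 0$ as $y\to 0^+$.

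The key observation is that, since each summand carries the factor $e^{-\pi n^2/(My)}$, the terms decay increasingly rapidly with $|n|$ as $y\to 0^+$, and the slowest-decaying (hence dominant) contributions come from $n=\pm 1$. I would therefore separate the inner sum as
\begin{eqnarray*}
\sum_{\substack{n=-\infty\\n\neq 0}}^\infty e^{-\frac{\pi n^2}{My}+\frac{2\pi ink}{M}}=e^{-\frac{\pi}{My}}\left(e^{\frac{2\pi ik}{M}}+e^{-\frac{2\pi ik}{M}}\right)+\sum_{|n|\geq 2}e^{-\frac{\pi n^2}{My}+\frac{2\pi ink}{M}},
\end{eqnarray*}
and then combine the two $|n|=1$ terms into $2e^{-\pi/(My)}\cos\!\left(\frac{2\pi k}{M}\right)$.

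The one estimate that needs care is the tail $|n|\geq 2$, which must be shown to be $o(1)$ relative to the leading factor $e^{-\pi/(My)}$. Factoring out $e^{-\pi/(My)}$, the tail is bounded in absolute value by $e^{-\pi/(My)}\sum_{|n|\geq 2}e^{-\pi(n^2-1)/(My)}$, and since $n^2-1\geq 3$ for $|n|\geq 2$, this is dominated by a convergent series each of whose terms tends to $0$ as $y\to 0^+$; a routine comparison (e.g.\ against $e^{-3\pi/(My)}$ times a geometric-type series) confirms the tail is $e^{-\pi/(My)}\cdot o(1)$. Consequently the inner sum equals $e^{-\pi/(My)}\bigl(2\cos(\frac{2\pi k}{M})+o(1)\bigr)$.

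Finally, because the outer sum over $k\in\mathcal{M}_{f}(k_0)$ is finite, I can substitute this expansion termwise and pull the common factor $e^{-\pi/(My)}/\sqrt{My}$ out front. The main terms assemble into $2\sideset{}{'}\sum_{k\in\mathcal{M}_{f}(k_0)}f(k)\cos(\frac{2\pi k}{M})=c_f(M,k_0)$, while the finitely many $o(1)$ contributions sum to a single $o(1)$, yielding
\begin{eqnarray*}
\theta_f(iy)=\dfrac{e^{-\frac{\pi}{My}}}{\sqrt{My}}\bigl(c_f(M,k_0)+o(1)\bigr)
\end{eqnarray*}
as claimed. The hypothesis $\sideset{}{'}\sum_{k\in\mathcal{M}_{f}(k_0)}f(k)=0$ is what allowed the $n=0$ term to be dropped in Lemma \ref{expodecay0}, and is therefore already built into the starting formula; the only genuine work here is the uniform control of the $|n|\geq 2$ tail, which is the step I expect to require the most attention.
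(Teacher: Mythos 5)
Your proposal is correct and follows essentially the same route as the paper: specialize Lemma \ref{expodecay0} to $\alpha=0$, isolate the $n=\pm 1$ terms to produce $2e^{-\pi/(My)}\cos\left(\frac{2\pi k}{M}\right)$, and bound the $|n|\geq 2$ tail by a factor that is $o(1)$ relative to $e^{-\pi/(My)}$. The only detail the paper makes explicit that you elide is the justification that $\alpha=0$ is admissible in Lemma \ref{expodecay0}, i.e.\ that $0\neq\gamma(i\infty)$ for all $\gamma\in\Gamma_M$.
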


\begin{proof}
First, we note that $\gamma(i\infty)\neq 0$ for all $\gamma\in\Gamma_M$. Thus, we choose $\alpha=0$ in Lemma \ref{expodecay0} to get
\begin{eqnarray}\label{expodecay000}
\theta_f(iy)&=&\dfrac{1}{\sqrt{My}}\sideset{}{'}\sum_{k\in\mathcal{M}_{f}(k_0)}f(k)\sum_{\substack{n=-\infty\\n\neq 0}}^\infty e^{-\frac{\pi n^2}{My}+\frac{2\pi ink}{M}}\notag\\
&=&\dfrac{1}{\sqrt{My}}\sideset{}{'}\sum_{k\in\mathcal{M}_{f}(k_0)}f(k)\sum_{n=1}^\infty(r_M(k,n)+r_M(k,-n))\;q_1^{n^2}
\end{eqnarray}
where $q_1=e^{-\frac{\pi}{My}}$ and $r_M(k,n)=e^{\frac{2\pi ink}{M}}$. Interchanging the sums in \eqref{expodecay000}, we find
\begin{eqnarray*}
\theta_f(iy)=\dfrac{1}{\sqrt{My}}\sum_{n=1}^\infty g_M(k_0,n)\;q_1^{n^2}
\end{eqnarray*}
where
\begin{eqnarray*}
g_M(k_0,n):=\sideset{}{'}\sum_{k\in\mathcal{M}_{f}(k_0)}f(k)(r_M(k,n)+r_M(k,-n)).
\end{eqnarray*}
At this point, note that $g_M(k_0,n)=O(1)$ and $q_1\rightarrow 0$ as $y\rightarrow 0^+$. This yields the result.
\end{proof}

Let $C: \mathbb{Z} \rightarrow \mathbb{C}$ be a periodic function with mean value zero and consider the $L$-series
\begin{eqnarray*}
L(s,C) :=\sum_{n=1}^\infty \dfrac{C(n)}{n^s},\; \Re(s) > 0,
\end{eqnarray*}
which has an analytic continuation to $\mathbb{C}$ \cite[Proposition, page 98]{LZ}.

\begin{lemma}\label{LZsense}
Let $f$ be an odd function with period $M\geq 2$ and support $S_f(k_0)$. Then as $t\rightarrow 0^+$, we have for $(p,q)=1$ that
\begin{eqnarray}
\theta_f\left(\dfrac{p}{q}+\dfrac{it}{2\pi}\right)&\sim &\sum_{r=0}^\infty L(-2r,C_{f,k_0})\dfrac{\left(-\frac{t}{2M}\right)^r}{r!},\label{thetaration}\\
\hat{\Theta}_f\left(\dfrac{p}{q}-\dfrac{it}{2\pi}\right)&\sim &\sum_{r=0}^\infty L(-2r,C_{f,k_0})\dfrac{\left(\frac{t}{2M}\right)^r}{r!}\label{theta1ration}
\end{eqnarray}
where $C_{f,k_0}(n):=\sum_{k\in\mathcal{M}_{f}(k_0)}f(k)\;C_f(n,k)$ and 
\begin{equation*}
C_f(n,k):=\begin{cases}
    e^{\frac{\pi i pn^2}{Mq}} & \emph{if}\;n\equiv k\;(\emph{mod}\;M),\\
     -e^{\frac{\pi i pn^2}{Mq}} & \emph{if}\;n\equiv -k\;(\emph{mod}\;M),\\
     0 &\emph{otherwise}.
\end{cases}
\end{equation*}
Thus, in the sense of Lawrence and Zagier \cite[page 103]{LZ}, \eqref{thetaration} and \eqref{theta1ration} imply that $\theta_f$ and $\hat{\Theta}_f$ ``agree to infinite order" at all rational numbers.
\end{lemma}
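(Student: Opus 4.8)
The plan is to establish both asymptotic expansions \eqref{thetaration} and \eqref{theta1ration} by reducing each partial theta series to a sum, over $k \in \mathcal{M}_{f}(k_0)$, of classical Mellin-transform asymptotics for periodic functions, following the framework in \cite[pp. 98--103]{LZ}. First I would fix the rational point $p/q$ and substitute $z = \frac{p}{q} + \frac{it}{2\pi}$ into the definition \eqref{thetaf0} of $\theta_f$, so that $q^{n^2/(2M)} = e^{\frac{\pi i p n^2}{Mq}} e^{-\frac{t n^2}{2M}}$. The exponential $e^{\frac{\pi i p n^2}{Mq}}$ depends only on $n \bmod 2Mq$ and, because $f$ is odd with support in $S_f(k_0)$, packaging the coefficient $f(n)$ together with this root-of-unity phase yields exactly the periodic function $C_{f,k_0}(n) = \sum_{k \in \mathcal{M}_{f}(k_0)} f(k)\, C_f(n,k)$ whose period divides $2Mq$. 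The key preliminary check is that $C_{f,k_0}$ has mean value zero over its period, which follows from the oddness of $f$ (the contributions of $n \equiv k$ and $n \equiv -k$ cancel); this is precisely the hypothesis needed to invoke the $L$-series machinery recalled just before the lemma.

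With this setup, $\theta_f\!\left(\frac{p}{q} + \frac{it}{2\pi}\right) = \sum_{n \geq 1} C_{f,k_0}(n)\, e^{-\frac{t n^2}{2M}}$ is a theta-like sum attached to a mean-zero periodic function, so I would apply the standard Mellin/Euler--Maclaurin argument: write the sum as $\frac{1}{2\pi i}\int_{(\sigma)} \Gamma(s) \left(\frac{t}{2M}\right)^{-s} L(2s, C_{f,k_0})\, ds$, shift the contour to the left past the poles of $\Gamma(s)$ at $s = -r$, and collect residues. Since $L(\cdot\,, C_{f,k_0})$ is entire (mean value zero kills the potential pole at $s=1$), each residue at $s = -r$ contributes $L(-2r, C_{f,k_0}) \frac{(-t/(2M))^r}{r!}$, giving the asymptotic expansion \eqref{thetaration} as $t \to 0^+$. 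For \eqref{theta1ration}, I would use the definition \eqref{nonhol} of the non-holomorphic Eichler integral $\hat{\Theta}_f$ together with the fact that $\Theta_f$ is, up to the factor $n$, the same periodic data; the sign change in the exponent (the argument being $\frac{p}{q} - \frac{it}{2\pi}$) produces $\left(\frac{t}{2M}\right)^r$ rather than $\left(-\frac{t}{2M}\right)^r$, and the same residue computation applies. Concretely, one evaluates the Eichler integral by substituting $w = \frac{p}{q} + \frac{iu}{2\pi}$ and recognizing that integrating $\Theta_f(w)(w-\tau)^{-1/2}$ against the Gaussian reproduces the Mellin representation with $L(-2r, C_{f,k_0})$ as residues.

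Comparing the two expansions, \eqref{thetaration} and \eqref{theta1ration} share identical coefficients $L(-2r, C_{f,k_0})$ differing only by the sign $(-1)^r$, which is exactly the statement that $\theta_f(p/q)$ and $\hat{\Theta}_f(p/q)$ ``agree to infinite order'' in the sense of \cite[page 103]{LZ}: their asymptotic series are related by $t \mapsto -t$, so all the Taylor-type coefficients match and the functions are asymptotically indistinguishable at every rational as one approaches from the upper and lower half-planes respectively. I would conclude by quoting the Lawrence--Zagier criterion verbatim to transfer the coincidence of asymptotic expansions into the formal ``agreement to infinite order'' assertion.

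The main obstacle I anticipate is the careful bookkeeping of the periodic function $C_{f,k_0}$ and its period relative to $2Mq$, in particular verifying that the product of $f(n)$ with the phase $e^{\frac{\pi i p n^2}{Mq}}$ genuinely assembles into the stated $C_f(n,k)$ pattern and that the mean value vanishes; a subtlety is that $e^{\frac{\pi i p n^2}{Mq}}$ has period $2Mq$ in $n$ while $n^2 \bmod 2Mq$ must be reconciled with the residues $n \equiv \pm k \pmod M$, so one must confirm that the phase is well-defined on the relevant residue classes and that the $\pm k$ symmetry of odd $f$ survives the quadratic Gauss phase. The contour-shifting and residue computation, by contrast, is entirely standard once the mean-zero periodic function is correctly identified, and I would cite \cite{LZ} for the analytic continuation and growth estimates on $L(s, C_{f,k_0})$ needed to justify the contour shift.
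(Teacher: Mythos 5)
Your overall strategy coincides with the paper's: substitute $z=\frac{p}{q}+\frac{it}{2\pi}$, absorb the quadratic phase $e^{\frac{\pi i pn^2}{Mq}}$ together with the odd coefficient $f(n)$ into the mean-zero periodic function $C_{f,k_0}$, and invoke the $L$-series asymptotics of \cite{LZ}. Your verification that $C_{f,k_0}$ has mean value zero (pairing $n\leftrightarrow 2Mq-n$, which swaps the classes $n\equiv k$ and $n\equiv -k\pmod M$ while preserving the phase) is correct and is exactly what the paper asserts; your Mellin/contour-shift derivation of \eqref{thetaration} is simply the proof of the Proposition on page 98 of \cite{LZ}, which the paper cites rather than reproves, so the first half is fine.

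The gap is in \eqref{theta1ration}. It is not the case that ``the sign change in the exponent produces $\left(\frac{t}{2M}\right)^r$ rather than $\left(-\frac{t}{2M}\right)^r$ and the same residue computation applies'': a literal sign flip would replace the convergent sum $\sum_{n\geq 1}C_{f,k_0}(n)e^{-tn^2/(2M)}$ by the divergent sum $\sum_{n\geq 1}C_{f,k_0}(n)e^{+tn^2/(2M)}$, to which no contour shift can be applied. What the change of variable $w\rightarrow w+\tau$ and termwise contour integration actually yield is
\begin{equation*}
\hat{\Theta}_f\left(\dfrac{p}{q}-\dfrac{it}{2\pi}\right)=\dfrac{1}{\sqrt{\pi}}\sum_{n\geq 1}C_{f,k_0}(n)\,e^{\frac{tn^2}{2M}}\,\Gamma\left(\dfrac{1}{2},\dfrac{tn^2}{M}\right),
\end{equation*}
an incomplete-gamma-weighted sum in which the growth of $e^{tn^2/(2M)}$ is tamed by the decay of $\Gamma\bigl(\frac{1}{2},\frac{tn^2}{M}\bigr)$. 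The assertion that this sum has the asymptotic expansion $\sum_{r\geq 0}L(-2r,C_{f,k_0})\frac{(t/(2M))^r}{r!}$ with the \emph{same} $L$-values is a separate (known but nontrivial) statement, which the paper imports as Lemma 4.3 of \cite{BM}; its proof requires a Mellin analysis of $\Gamma\bigl(\frac{1}{2},x\bigr)e^{x/2}$, not of $e^{-x/2}$ alone. You gesture at this (``integrating against the Gaussian reproduces the Mellin representation'') but neither carry out the computation nor cite a precise result, so as written the second expansion is unproved. To close the gap, either perform the incomplete-gamma Mellin computation or quote \cite[Lemma 4.3]{BM} (or the corresponding argument in \cite{LZ}) at that step.
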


\begin{proof}
For $t>0$, we have
\begin{eqnarray}\label{thetaL}
\theta_f\left(\dfrac{p}{q}+\dfrac{it}{2\pi}\right)&=&\sum_{n\geq 0}f(n)\;e^{\frac{\pi i pn^2}{Mq}-\frac{tn^2}{2M}}\notag\\
&=&\sum_{k\in\mathcal{M}_{f}(k_0)}f(k)\left(\sum_{\substack{n>0\\n\equiv k\;(\text{mod}\;M)}}e^{\frac{\pi i pn^2}{Mq}-\frac{tn^2}{2M}}-\sum_{\substack{n>0\\n\equiv -k\;(\text{mod}\;M)}}e^{\frac{\pi i pn^2}{Mq}-\frac{tn^2}{2M}}\right)\notag\\
&=&\sum_{n\geq 1}C_{f,k_0}(n)\;e^{-\frac{tn^2}{2M}}.
\end{eqnarray}
For each $k\in\mathcal{M}_{f}(k_0)$, $C_{f}(n,k)$ is an odd function with period $Mq$ or $2Mq$ according as $2$ divides $p$ or does not divide $p$. Also, $C_f(n,k)$ has mean value zero. This implies that $C_{f,k_0}(n)$ is an odd function with period $Mq$ or $2Mq$ according as $2$ divides $p$ or does not divide $p$ and with mean value zero. Thus, by \cite[Proposition, page 98]{LZ} and \eqref{thetaL}, we obtain
\begin{eqnarray*}
\theta_f\left(\dfrac{p}{q}+\dfrac{it}{2\pi}\right)\sim\sum_{r=0}^\infty L(-2r,C_{f,k_0})\dfrac{\left(-\frac{t}{2M}\right)^r}{r!}.
\end{eqnarray*}
Next, we turn to $\hat{\Theta}_f(\tau)$ where $\tau=x+iy$ with $y<0$. First, we have for $w \in \mathbb{H}$ 
\begin{eqnarray*}
\Theta_f(w)=\sum_{k\in\mathcal{M}_{f}(k_0)}f(k)\left(\sum_{\substack{n>0\\n\equiv k\;(\text{mod}\;M)}}n\;e^{\frac{\pi in^2w}{M}}-\sum_{\substack{n>0\\n\equiv -k\;(\text{mod}\;M)}}n\;e^{\frac{\pi in^2w}{M}}\right).
\end{eqnarray*}
Thus, by the change of variable $w\rightarrow w+\tau$ and contour integration, it follows that
\begin{eqnarray}\label{Eiceval1}
\hat{\Theta}_f(\tau)=\dfrac{1}{\sqrt{iM}}\sum_{k\in\mathcal{M}_{f}(k_0)}f(k)\left(\sum_{\substack{n>0\\n\equiv k\;(\text{mod}\;M)}}-\sum_{\substack{n>0\\n\equiv -k\;(\text{mod}\;M)}}\right)n e^{\frac{\pi i n^2\tau}{M}}\int_{-2iy}^{i\infty}e^{\frac{\pi in^2 w}{M}}w^{-\frac{1}{2}}dw.
\end{eqnarray}
To evaluate the integral on the right-hand side of \eqref{Eiceval1}, we let $w\rightarrow \frac{iMw}{\pi n^2}$. This yields
\begin{eqnarray}\label{evalcv}
\int_{-2iy}^{i\infty}e^{\frac{\pi in^2 w}{M}}w^{-\frac{1}{2}}dw=\sqrt{\dfrac{iM}{\pi n^2}}\int_{-\frac{2\pi yn^2}{M}}^\infty e^{-w}w^{-\frac{1}{2}}dw=\sqrt{\dfrac{iM}{\pi n^2}}\;\Gamma\left(\dfrac{1}{2},-\dfrac{2\pi yn^2}{M}\right)
\end{eqnarray}
where $\Gamma(a,x)$ is the upper incomplete gamma function defined by
\begin{eqnarray*}
\Gamma(a,x):=\int_x^{\infty}w^{a-1}e^{-w}dw.
\end{eqnarray*}
Thus, \eqref{Eiceval1} and \eqref{evalcv} yield
\begin{eqnarray}\label{evalcv2}
\hat{\Theta}_f(\tau)=\dfrac{1}{\sqrt{\pi}}\sum_{k\in\mathcal{M}_{f}(k_0)}f(k)\left(\sum_{\substack{n>0\\n\equiv k\;(\text{mod}\;M)}}-\sum_{\substack{n>0\\n\equiv -k\;(\text{mod}\;M)}}\right)e^{\frac{\pi i n^2\tau}{M}}\Gamma\left(\dfrac{1}{2},-\dfrac{2\pi yn^2}{M}\right)
\end{eqnarray}
and so \eqref{evalcv2} implies
\begin{eqnarray}\label{evalrati}
\hat{\Theta}_f\left(\dfrac{p}{q}-\dfrac{it}{2\pi}\right)=\dfrac{1}{\sqrt{\pi}}\sum_{n\geq 1}\;C_{f,k_0}(n)\;e^{\frac{tn^2}{2M}}\;\Gamma\left(\dfrac{1}{2},\dfrac{tn^2}{M}\right).
\end{eqnarray}
Since $C_{f,k_0}(n)$ is an odd periodic function, it now follows from \cite[Lemma 4.3]{BM} and \eqref{evalrati} that
\begin{eqnarray*}
\hat{\Theta}_f\left(\dfrac{p}{q}-\dfrac{it}{2\pi}\right)\sim \sum_{r=0}^\infty L(-2r,C_{f,k_0})\dfrac{\left(\frac{t}{2M}\right)^r}{r!}.
\end{eqnarray*}
\end{proof}

\section{Proof of Theorem \ref{main}}
We are now in a position to prove Theorem \ref{main}.
\begin{proof}[Proof of Theorem \ref{main}]
Let $f$ be an even function with period $M \geq 2$ and support $S_f(k_0)$. Define the Eichler integral of $\theta_f(z)$ as follows:
\begin{eqnarray*}
\tilde{\theta}_f(z):=\int_{z}^{i\infty}\theta_f(\tau)(\tau-\bar{z})^{-\frac{3}{2}}\;d\tau.
\end{eqnarray*}
In view of Lemma \ref{infexpo} and Corollary \ref{expodecay00}, we see that $\tilde{\theta}_f(z)$ is well-defined on $\mathbb{H}\cup A_M$. Thus, for $z=x+iy\in\mathbb{H}\cup A_M$, it follows using contour integration that
\begin{eqnarray}\label{contint}
\tilde{\theta}_f(z)=\sqrt{\dfrac{\pi}{M}}e^{-\frac{i\pi}{4}}\sum_{n\geq 0}n\;f(n)\;\Gamma\left(-\dfrac{1}{2},\dfrac{2\pi n^2 y}{M}\right)e^{\frac{\pi i n^2}{M}\bar{z}}.
\end{eqnarray}
For $\alpha\in A_M$, we see from \eqref{contint} and the fact that $\Gamma(-\frac{1}{2})=-2\sqrt{\pi}$
\begin{eqnarray}\label{connect}
\tilde{\theta}_f(\alpha)=-\dfrac{2\pi}{\sqrt{M}}e^{-\frac{i\pi}{4}}\Theta_f(\alpha).
\end{eqnarray}
For $\gamma=\begin{pmatrix}
a & b\\c & d\end{pmatrix}\in\Gamma_M$, it follows from Lemma \ref{trantheta}, \eqref{connect} and
\begin{equation*} \label{transform1}
d(\gamma\tau)=\dfrac{d\tau}{(c\tau+d)^2},\hspace{0.5cm}\gamma\tau-\gamma \bar{z}=\dfrac{\tau-\bar{z}}{(c\tau+d)(c\bar{z}+d)}
\end{equation*}
that
\begin{eqnarray*}\label{cocycle}
\Theta_f(\alpha)-(\Theta_f|_{\frac{3}{2},\chi}\gamma)(\alpha)=:r_{\gamma,f}(\alpha)
\end{eqnarray*}
where
\begin{eqnarray*}\label{rgamma}
r_{\gamma,f}(z)=-\dfrac{\sqrt{M}\cdot e^{\frac{i\pi}{4}}}{2\pi}\int_{\gamma^{-1}(i\infty)}^{i\infty}\theta_f(\tau)(\tau-\bar{z})^{-\frac{3}{2}}\;d\tau.
\end{eqnarray*}
It only remains to observe that $r_{\gamma,f}(z)$ is $C^{\infty}$ and real-analytic in $\mathbb{R}\setminus\{\gamma^{-1}(i\infty)\}$.

Let $f$ be an odd function with period $M \geq 2$ and support $S_f(k_0)$. For $\tau\in\mathbb{H}_{-}$ and $\gamma\in\Gamma_M$, it follows from (\ref{nonhol}) and Lemma \ref{tranTheta} that
\begin{eqnarray}\label{transpropTheta}
\hat{\Theta}_f(\tau)-(\hat{\Theta}_f|_{\frac{1}{2},\chi}\gamma)(\tau)=r_{\gamma,f}(\tau)
\end{eqnarray}
where 
\begin{eqnarray*} \label{trans}
r_{\gamma,f}(\tau)=\dfrac{1}{\sqrt{iM}}\int_{\gamma^{-1}(i\infty)}^{i\infty} \Theta_f(w) (w-\tau)^{-\frac{1}{2}} \; dw.
\end{eqnarray*}
Since by Lemma \ref{LZsense}, $\theta_f$ and $\hat{\Theta}_f$ ``agree to infinite order" at all rational numbers and $\hat{\Theta}_f(\tau)$ satisfies the transformation property in \eqref{transpropTheta} for all $\tau\in\mathbb{H}_{-}$, it follows in the sense of Lawrence and Zagier \cite[Page 103]{LZ} that $\theta_f(z)$ is a strong quantum modular form of weight $1/2$ on $\mathbb{Q}$ with respect to $\Gamma_M$. It is also clear that $r_{\gamma,f}(\tau)$ is a holomorphic function in $\mathbb{H}_{-}$, extends as a $\mathbb{C}^{\infty}$ function to $\mathbb{R}$ and is real-analytic in $\mathbb{R}\setminus\{\gamma^{-1}(i\infty)\}$. Here, $\chi$ is the multiplier given by (\ref{multi}). Finally, we note that as $\Theta_f(z)$ vanishes at $z=i\infty$ (and thus at all $\alpha \in B_M$), $\hat{\Theta}_f$ is well-defined on $B_M$. Thus, for $\tau\in\mathbb{H}_{-}\cup B_M$, $\hat{\Theta}_f(\tau)$ satisfies (\ref{transpropTheta}). Now, one can check that
\begin{equation} \label{thetafsum}
\theta_{f}(z) = \sum_{k\in\mathcal{M}_f(k_0)}f(k)\left(\sum_{\substack{n\geq 0 \\ n\equiv k\;(\text{mod}\;M)}}-\sum_{\substack{n\geq 0\\ n\equiv -k\;(\text{mod}\;M) }}\right) q^{\frac{n^2}{2M}}. 
\end{equation}
For $\tau=\alpha \in B_M$, it follows from (\ref{evalcv2}), (\ref{thetafsum}) and $\Gamma(\frac{1}{2})=\sqrt{\pi}$ that $\theta_f(\alpha) = \hat{\Theta}_f(\alpha)$. Thus, $\theta_f(\alpha)$ is a quantum modular form of weight $1/2$ on $B_M$ with respect to $\Gamma_M$.

\end{proof}

\section{Examples}

In this section, we illustrate Theorem \ref{main} with four examples.

\subsection{Kontsevich-Zagier series $\mathscr{F}_t(q)$ for torus knots $T(3,2^t)$}
Let $K$ be a knot and $J_N(K;q)$ be the usual colored Jones polynomial, normalized to be $1$ for the unknot. For the importance of this quantum knot invariant, see, for example, \cite{bd}, \cite{g}, \cite{my} or \cite{z}. If $T(3,2)$ is the right-handed torus knot, then \cite{habiro, thang}
\begin{equation} \label{t32}
J_N(T(3,2); q) = q^{1-N} \sum_{n \geq 0} q^{-nN} (q^{1-N})_n.
\end{equation}
Upon comparing (\ref{kz}) and (\ref{t32}), we immediately observe that $F(q)$ matches the colored Jones polynomial for $T(3,2)$ at a root of unity $q=\zeta_N := e^{\frac{2\pi i}{N}}$, that is,
\begin{equation*}
\zeta_N F(\zeta_N) = J_N(T(3,2); \zeta_N).
\end{equation*}

Consider the family of torus knots $T(3, 2^t)$ for an integer $t \geq 2$. In this case, a $q$-hypergeometric expression for the colored Jones polynomial has been computed, namely (see page 41, Th{\'e}or{\`e}me 3.2 in \cite{konan}, cf. \cite{hk2})
\begin{align} \label{t32t}
J_N(T(3,2^t); q) & = (-1)^{h''(t)} q^{2^t - 1 - h'(t) - N} \sum_{n \geq 0} (q^{1-N})_n q^{-Nn m(t)} \nonumber \\
& \times \sum_{3 \sum_{\ell=1}^{m(t)-1} j_{\ell} \ell \,\equiv \,1\;(\tiny{\mbox{mod}} \; m(t))} (-q^{-N})^{\sum_{\ell=1}^{m(t)-1} j_{\ell}} q^{\frac{-a(t) + \sum_{\ell=1}^{m(t)-1} j_{\ell} \ell}{m(t)} + \sum_{\ell=1}^{m(t)-1} \binom{j_{\ell}}{2}} \nonumber \\
& \quad \quad \times \sum_{k=0}^{m(t) - 1} q^{-kN} \prod_{\ell=1}^{m(t) - 1} \begin{bmatrix} n + I(\ell \leq k) \\ j_{\ell} \end{bmatrix} 
\end{align}
where 
\begin{equation*}
h''(t) = 
\begin{cases}
\frac{2^t-1}{3} &\text{if $t$ is even}, \\
\frac{2^t -2}{3} &\text{if $t$ is odd}, \\
\end{cases} \quad \quad
h'(t) =
\begin{cases}
\frac{2^t-4}{3} &\text{if $t$ is even}, \\
\frac{2^t -5}{3} &\text{if $t$ is odd}, \\
\end{cases} \quad \quad 
a(t)=
\begin{cases}
\frac{2^{t-1} +1}{3} &\text{if $t$ is even}, \\
\frac{2^t +1}{3} &\text{if $t$ is odd}, \\
\end{cases}
\end{equation*}
$m(t)=2^{t-1}$, $I(*)$ is the characteristic function and
\begin{equation*}
\begin{bmatrix} n \\ k \end{bmatrix} = \begin{bmatrix} n \\ k \end{bmatrix}_{q} := \frac{(q)_n}{(q)_{n-k} (q)_k}
\end{equation*}
is the $q$-binomial coefficient. We now define the {\it Kontsevich-Zagier series for torus knots $T(3,2^t)$} as\footnote{For $t=1$, one may define the sum over the $j_{\ell}$ to be 1 in (\ref{t32t}) and (\ref{kztorus}) to recover (\ref{t32}) and (\ref{kz}).}
\begin{align} \label{kztorus} 
\mathscr{F}_t(q) & = (-1)^{h''(t)} q^{-h'(t)} \sum_{n \geq 0} (q)_n \sum_{3 \sum_{\ell = 1}^{m(t) - 1} j_{\ell} \ell \,\equiv \,1\;(\tiny{\mbox{mod}} \; m(t))} q^{\frac{-a(t) + \sum_{\ell=1}^{m(t) - 1} j_{\ell} \ell}{m(t)} + \sum_{\ell=1}^{m(t)-1} \binom{j_{\ell}}{2}} \nonumber \\
& \times \sum_{k=0}^{m(t)-1} \prod_{\ell=1}^{m(t) - 1} \begin{bmatrix} n + I(\ell \leq k) \\ j_{\ell} \end{bmatrix}.
\end{align}
The expression $\mathscr{F}_t(q)$ converges in a similar manner as $F(q)$ and, by (\ref{t32t}) and (\ref{kztorus}), satisfies

\begin{equation*}
\zeta_{N}^{2^t-1} \mathscr{F}_t(\zeta_N) = J_N(T(3,2^t); \zeta_N).
\end{equation*}

An application of Theorem \ref{main} is the following. For an integer $t \geq 2$, set $s_t := \frac{(2^{t+1} - 3)^2}{3\cdot 2^{t+2}}$.

\begin{corollary}
For an integer $t \geq 2$ and $\alpha \in\mathbb{Q}$, $\phi_t(\alpha) :=e^{2\pi i s_t \alpha}\mathscr{F}_t(e^{2\pi i\alpha})$ is a quantum modular form of weight $3/2$ on $A_{3\cdot 2^{t+1}} = \{\alpha\in\mathbb{Q}: \alpha\;\mbox{is}\;\Gamma_1(3\cdot 2^{t+2}) \mbox{-equivalent to}\;0\;\mbox{or}\;i\infty\}$ with respect to $\Gamma_1(3\cdot 2^{t+2})$. 
\end{corollary}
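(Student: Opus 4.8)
The plan is to deduce the corollary from Theorem~\ref{main} by exhibiting a ``strange identity'' for $\mathscr{F}_t(q)$ that realizes $\phi_t$ as a partial theta series of type $\Theta_f$. I would set $M = 3\cdot 2^{t+1}$, so that $2M = 3\cdot 2^{t+2}$ and, since $M$ is even, $\Gamma_M = \Gamma_1(2M) = \Gamma_1(3\cdot 2^{t+2})$, matching the group in the statement. Writing $k_0$ for the residue of $(2^{t+1}-3)^2$ modulo $2M$, I note that $s_t = \tfrac{(2^{t+1}-3)^2}{2M}$, so the prefactor $e^{2\pi i s_t\alpha} = q^{s_t}$ is precisely the power of $q$ that turns a partial theta series with exponents $\tfrac{n^2 - (2^{t+1}-3)^2}{2M}$ into one with exponents $\tfrac{n^2}{2M}$, that is, into $\Theta_f$.

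The heart of the argument is to produce an even function $f$ of period $M$, supported on $S_f(k_0)$ with least support element $2^{t+1}-3$, and a nonzero constant $c=c_t$ for which
\[
\mathscr{F}_t(q)\;``="\;c\sum_{n\geq 1} n\,f(n)\,q^{\frac{n^2-(2^{t+1}-3)^2}{2M}},
\]
where $``="$ means agreement to all orders at every root of unity, exactly as in (\ref{strid}). This is the $T(3,2^t)$ analogue of Zagier's identity. I would obtain it by starting from the $q$-hypergeometric expression (\ref{kztorus}): expanding $(q)_n$ and the $q$-binomial coefficients at a root of unity and resumming, the constrained nested sum over the $j_\ell$ collapses, via the usual Kontsevich-Zagier telescoping, into a one-dimensional partial theta series whose coefficients are periodic of period $M = 3\cdot 2^{t+1}$, as dictated by the torus-knot data $(3,2^t)$. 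For $t=1$ this must specialize to (\ref{strid}) with $f=\bigl(\tfrac{12}{\cdot}\bigr)$ and $k_0=1$, which is a useful consistency check; for general $t$ the coefficient function $f$ is the explicit even function determined by the residue condition defining $\mathcal{S}(k_0)$.

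Granting the strange identity, the remainder is bookkeeping. Since $q^{s_t}\mathscr{F}_t(q) = c\,\Theta_f(z)$ to all orders at roots of unity, the two sides agree as functions on $\mathbb{Q}$, so $\phi_t(\alpha) = c\,\Theta_f(\alpha)$ for every $\alpha\in\mathbb{Q}$; scaling by the constant $c$ preserves quantum modularity, so it suffices to treat $\Theta_f$. As $f$ is even, Theorem~\ref{main} shows at once that $\Theta_f(\alpha)$ is a weight $3/2$ quantum modular form on $A_M$ with respect to $\Gamma_M = \Gamma_1(3\cdot 2^{t+2})$. To identify $A_M$ with the set displayed in the statement, I would verify the vanishing $\sideset{}{'}\sum_{k\in\mathcal{M}_f(k_0)} f(k)=0$, which by the definition of $A_M$ forces it to be the set of $\alpha$ that are $\Gamma_M$-equivalent to $0$ or $i\infty$ (this is also the hypothesis under which Corollary~\ref{expodecay00} applies); for $t=1$ this reads $\bigl(\tfrac{12}{1}\bigr)+\bigl(\tfrac{12}{5}\bigr)=0$, and in general it is a short finite check on the values of $f$.

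I expect the decisive obstacle to be the strange identity: extracting the one-dimensional partial theta series from the triple sum in (\ref{kztorus}) and, above all, identifying the even periodic coefficient function $f$ and the residue $k_0 \equiv (2^{t+1}-3)^2 \pmod{2M}$ explicitly and uniformly in $t$. Everything downstream---the choice $M=3\cdot 2^{t+1}$, the passage to $\Gamma_1(3\cdot 2^{t+2})$, and the precise form of $A_M$---is then a direct application of Theorem~\ref{main} and the preliminary lemmas.
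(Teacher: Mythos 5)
Your proposal follows the paper's proof exactly: the paper likewise sets $M=3\cdot 2^{t+1}$ and $k_0\equiv(2^{t+1}-3)^2 \pmod{2M}$ and reduces everything to Theorem~\ref{main} via a strange identity, which it does not derive from (\ref{kztorus}) but imports as Proposition 2.4 of \cite{Be}, namely $\mathscr{F}_t(q)\;``="\;-\tfrac{1}{2}\Theta_{\chi_t}(z)$ with $\chi_t$ the explicit even period-$M$ function equal to $+1$ on $\pm(2^{t+1}-3)$ and $-1$ on $\pm(2^{t+1}+3)$ modulo $M$, so your ``decisive obstacle'' is a citation rather than a computation. With $\mathcal{M}_{\chi_t}(k_0)=\{2^{t+1}-3,\,2^{t+1}+3\}$ your final vanishing check reads $\chi_t(2^{t+1}-3)+\chi_t(2^{t+1}+3)=1-1=0$, which confirms that $A_M$ is the $\Gamma_M$-orbit of $0$ and $i\infty$, exactly as in the statement.
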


\begin{proof}
The Kontsevich-Zagier series $\mathscr{F}_t(q)$ satisfies the ``strange" identity (see Proposition 2.4 in \cite{Be})\footnote{Taking $t=1$ in (\ref{KZstrange}) and (\ref{chit}) recovers (\ref{strid}).}
\begin{eqnarray} \label{KZstrange}
\mathscr{F}_t(q)``=" -\dfrac{1}{2}\Theta_{\chi_t}(z)
\end{eqnarray}
where
\begin{equation} \label{chit}
\chi_t(n) :=
\begin{cases}
1 &\text{if $n \equiv 2^{t+1}-3$, $3+2^{t+2} \; (\text{mod}\; 3\cdot2^{t+1}),$} \\
-1 &\text{if $n \equiv 2^{t+1} + 3$, $2^{t+2} - 3 \; (\text{mod}\; 3\cdot2^{t+1}),$} \\
0 &\text{otherwise.}
\end{cases}
\end{equation} 
Note that $\chi_t$ is an even function with period $M=3\cdot 2^{t+1}$. For $k_0=(2^{t+1}-3)^2 \pmod{3\cdot 2^{t+2}}$, consider the set $\mathcal{M}_{\chi_t}(k_0)=\{2^{t+1}-3, 2^{t+1}+3\}$. Thus, $S_{\chi_t}(k_0)=\{\pm(2^{t+1}-3),\pm(2^{t+1}+3)\}$. By Theorem \ref{main} and (\ref{KZstrange}), the result follows.
\end{proof} 

\subsection{Generating function for odd balanced unimodal sequences}
Let $v(n)$ denote the number of odd-balanced unimodal sequences of weight $2n+2$ and $v(m,n)$ the number of such sequences having rank $m$. In \cite{kll}, the authors study the bivariate generating function 
\begin{equation*}
\mathcal{V}(x,q) := \sum_{n\geq 0}\dfrac{(-xq,-x/q)_nq^n}{(q,q^2)_{n+1}} = \sum_{\substack{n \geq 0 \\ m \in \mathbb{Z}}} \, v(m,n) x^m q^n
\end{equation*}
and prove that for $\alpha \in \mathbb{Q}$, $q^{-7}\mathcal{V}(-1,q^{-8})\big\rvert_{z\rightarrow \alpha}$ is a quantum modular form of weight $3/2$ on $A=\{\alpha\in\mathbb{Q}: \alpha\;\mbox{is}\;\Gamma_0(16)\mbox{-equivalent to}\;i\infty\}$ with respect to $\Gamma_0(16)$. A slight variant of this result is as follows. If we let $q\rightarrow q^{2}$ in the identity (see \cite[page 3693]{kll})
\begin{eqnarray*}\label{V}
\mathcal{V}(-1,q^{-1})=-\dfrac{q}{2}\sum_{n\geq 0}(2n+1)\;q^{n(n+1)/2},
\end{eqnarray*}
then
\begin{eqnarray*}
q^{-\frac{7}{4}}\mathcal{V}(-1,q^{-2})=-\dfrac{1}{2}\sum_{n\geq 0}n\;\psi(n)\;q^{\frac{n^2}{4}}
\end{eqnarray*}
where $\psi(n)$ is the (non-primitive) Dirichlet character modulo $2$ which is $0$ or $1$ according as $n$ is even or odd. Note that $\psi(n)$ is even with period $2$ and $S_\psi(k_0)=\mathcal{M}_{\psi}(k_0)=\{1\}$ where $k_0=1$. 
For $\alpha\in\mathbb{Q}$, it follows from Theorem \ref{main} that $\Theta_\psi(\alpha) := e^{-\frac{7\pi i\alpha}{2}}\;\mathcal{V}(-1,e^{-4\pi i\alpha})$ is a quantum modular form of weight $3/2$ on $A_2=\{\alpha\in\mathbb{Q}: \alpha\;\mbox{is}\;\Gamma_{1}(4)\mbox{-equivalent to}\;i\infty\}$ with respect to $\Gamma_1(4)$. Precisely, 
$\Theta_\psi(\alpha)$ satisfies 
\begin{eqnarray*}
\Theta_\psi(\alpha)-(\Theta_\psi|_{\frac{3}{2},\chi}\gamma)(\alpha)=r_{\gamma,\psi}(\alpha)
\end{eqnarray*}
for all $\gamma=\begin{pmatrix}
a & b\\c & d\end{pmatrix}\in\Gamma_1(4)$ and $\alpha\in A_2$, and where 
\begin{eqnarray*}
r_{\gamma,\psi}(z)=\dfrac{e^{\frac{\pi i}{4}}}{2\sqrt{2}\pi}\int_{\gamma^{-1}(i\infty)}^{i\infty}\theta_{\psi}(\tau)(\tau-\bar{z})^{-\frac{3}{2}}\;d\tau.
\end{eqnarray*}
Here, $r_{\gamma,\psi}:\mathbb{R}\rightarrow\mathbb{C}$ is a $C^{\infty}$ function which is real analytic in $\mathbb{R}\setminus\{\gamma^{-1}(i\infty)\}$, and $\chi$ is a multiplier given by
\begin{eqnarray*}
\chi(\gamma)=e^{\frac{\pi i ab}{2}} \left(\dfrac{4c}{d}\right)\varepsilon_d^{-1}.
\end{eqnarray*}

\subsection{Kontsevich-Zagier series for torus knots $T(2,2m+1)$}
Let $m\in\mathbb{N}$. For $0\leq \ell\leq m-1$, define the Kontsevich-Zagier series for the torus knot $T(2,2m+1)$ as follows:
\begin{eqnarray*}
X_m^{(\ell)}(q):=\sum_{k_1,k_2,\cdots,k_m=0}^\infty (q)_{k_m} q^{k_1^2+\cdots+k_{m-1}^2+k_{\ell+1}+\cdots+k_{m-1}}\prod_{i=1}^{m-1}\begin{bmatrix} k_{i+1} + \delta_{i,\ell} \\ k_{i} \end{bmatrix}
\end{eqnarray*}
where $\delta_{i,\ell}$ is the characteristic function. Hikami \cite{hikami2} established the strange identity
\begin{eqnarray}\label{strangehikami}
X_m^{(\ell)}(q)``=" -\dfrac{1}{2}\sum_{n=0}^\infty n \;\chi_{8m+4}^{(\ell)}(n)\;q^{\frac{n^2-(2m-2\ell-1)^2}{8(2m+1)}}
\end{eqnarray}
where 
\begin{equation*} \label{genchi}
\chi_{8m+4}^{(\ell)}(n):=
\begin{cases}
1 &\text{if $n \equiv 2m-2\ell-1$, $6m+2\ell+5$ $\pmod{8m+4}$,} \\
-1 &\text{if $n \equiv 2m+2\ell+3$, $6m-2\ell+1$ $\pmod{8m+4}$,} \\
0 &\text{otherwise.}
\end{cases}
\end{equation*}
Note that $f(n) :=\chi_{8m+4}^{(\ell)}(n)$ is an even function with period $8m+4$. For $k_0=(2m-2\ell-1)^2 \pmod{16m+8}$, consider the set $\mathcal{M}_{f}(k_0)=\{2m-2\ell-1, 2m+2\ell+3\}$. Thus, $S_{f}(k_0)=\{\pm (2m-2\ell-1), \pm (2m+2\ell+3)\}$.
Observe that $\sum_{k\in\mathcal{M}_{f}(k_0)} f(k)=0$. Thus, for $\alpha\in\mathbb{Q}$, Theorem \ref{main} and \eqref{strangehikami} imply that $e^{\frac{\pi i\alpha(2m-2\ell-1)^2}{8m+4}}X_m^{(\ell)}(e^{2\pi i\alpha})=-\dfrac{1}{2}\Theta_{f}(\alpha)=:\tilde{\Theta}_{m,\ell}(\alpha)$ is a quantum modular form of weight $3/2$ on $A_{8m+4}=\{\alpha\in\mathbb{Q}: \alpha\;\mbox{is}\;\Gamma_1(16m+8)\mbox{-equivalent to}\;0\;\mbox{or}\;i\infty\}$ with respect to $\Gamma_1(16m+8)$. Precisely, we have 
\begin{eqnarray*}
\tilde{\Theta}_{m,\ell}(\alpha)-\left(\tilde{\Theta}_{m,\ell}|_{\frac{3}{2},\chi}\gamma\right)(\alpha)=r_{\gamma,f}(\alpha)
\end{eqnarray*}
for all $\gamma=\begin{pmatrix}
a & b\\c & d\end{pmatrix}\in\Gamma_1(16m+8)$ and $\alpha\in A_{8m+4}$, where 
\begin{eqnarray*}
r_{\gamma, f}(z)=\dfrac{\sqrt{8m+4}\cdot e^{\frac{\pi i}{4}}}{4\pi}\int_{\gamma^{-1}(i\infty)}^{i\infty}\theta_{f}(\tau)(\tau-\bar{z})^{-\frac{3}{2}}\;d\tau.
\end{eqnarray*}
Here, $r_{\gamma, f}:\mathbb{R}\rightarrow\mathbb{C}$ is a $C^{\infty}$ function which is real analytic in $\mathbb{R}\setminus\{\gamma^{-1}(i\infty)\}$, and $\chi$ is a multiplier given by
\begin{eqnarray*}
\chi(\gamma)=e^{\frac{\pi i ab (2m-2\ell-1)^2}{(8m+4)}} \left(\dfrac{2c(8m+4)}{d}\right)\varepsilon_d^{-1}.
\end{eqnarray*}
We remark that Hikami proved $\tilde{\Theta}_{m,\ell}(z)$ is a vector-valued quantum modular form of weight $3/2$ on $SL_2(\mathbb{Z})$ (see \cite[page 195]{hikami2} for details). 

\subsection{Rogers' false theta function} For $M\in\mathbb{N}$ and $1\leq j<M$ with $j\neq \frac{M}{2}$, consider the false theta function of Rogers:
\begin{eqnarray*}
F_{j,M}(z)&:=&\sum_{n\equiv j\;(\text{mod}\;M)}\sgn(n)\;q^{\frac{n^2}{2M}}=\left(\sum_{\substack{n>0\\n\equiv j\;(\text{mod}\;M)}}-\sum_{\substack{n>0\\n\equiv -j\;(\text{mod}\;M)}}\right)q^{\frac{n^2}{2M}} = \sum_{n>0}f(n)\;q^{\frac{n^2}{2M}}
\end{eqnarray*}
where $f(n)$ is the function defined by $1$ or $-1$ according as $n\equiv j$ or $-j\;(\mbox{mod}\;M)$ and $0$ otherwise. Note that $F_{\frac{M}{2},M}(z)=0$. Here, $f$ is an odd function with period $M$. In this case, $\mathcal{M}(k_0)=\{j\}$ (respectively, $\mathcal{M}(k_0)=\{M-j\}$) for $1\leq j< \frac{M}{2}$ (respectively, $\frac{M}{2} <j<M$) with $k_0=j^2\;(\mbox{mod}\;2M)$ (respectively, $k_0=(M-j)^2\;(\mbox{mod}\;2M)$). So, $S_f(k_0)=\{j, M-j\}$. Thus, for $\alpha\in\mathbb{Q}$, Theorem \ref{main} implies that $F_{j,M}(\alpha)$ is a strong quantum modular form of weight $1/2$ on $\mathbb{Q}$ with respect to $\Gamma_{M}$ (given by (\ref{modd})). This result (with $z$ replaced by $\frac{z}{M}$ and $M$ even) was discussed in \cite[Theorem 4.1]{BM} (see \cite{BN} for a vector-valued version). More generally, for $1\leq k_0<2M$, if 
\begin{eqnarray*}
F_M(z):=\sum_{n>0}h(n)\;q^{\frac{n^2}{2M}}=
\sum_{j\in \mathcal{M}_h(k_0)}h(j)\;F_{j,M}(z)
\end{eqnarray*}
where $h(n)$ is an odd function with period $M$ and support $S_h(k_0)$, then Theorem \ref{main} shows that $F_M(z)$ is a strong quantum modular form of weight $1/2$ on $\mathbb{Q}$ with respect to $\Gamma_{M}$. Finally, $F_{j,M}(\alpha)$ and, more generally, $F_M(\alpha)$ are quantum modular forms of weight $1/2$ on $B_M$ with respect to $\Gamma_M$.  

\section*{Acknowledgements}
The first author is supported by grant SFB F50-06 of the Austrian Science Fund (FWF). The second author would like to thank the Max-Planck-Institut f\"ur Mathematik for their support during the initial stages of this project, the Ireland Canada University Foundation for the James M. Flaherty Visiting Professorship award and McMaster University for their hospitality during his stay from May 17 to August 9, 2019. The second author also thanks Yingkun Li for a clarifying remark during a visit to TU Darmstadt on April 23, 2019. Finally, the authors thank Jeremy Lovejoy for insightful comments on a preliminary version of this paper, Sergei Gukov for kindly reminding us of \cite{g} and the referee for helpful comments and suggestions.

\end{document}